\newcommand{\al}{\alpha}
\newcommand{\be}{\beta}
\newcommand{\de}{\delta}
\newcommand{\De}{\Delta}
\newcommand{\Ga}{\Gamma}
\newcommand{\la}{\lambda}
\newcommand{\mg}{\mathfrak{g}}
\newcommand{\q}{\mathbf{q}}
\newcommand{\bN}{\mathbb{N}}
\newcommand{\bQ}{\mathbb{Q}}
\newcommand{\bK}{\mathbb{K}}
\newcommand{\bZ}{\mathbb{Z}}
\newcommand{\di}{\diamond}
\newcommand{\La}{\Lambda}
\newcommand{\ot}{\otimes}
\newcommand{\om}{\omega}
\newcommand{\op}{\oplus}
\newcommand{\si}{\sigma}
\newcommand{\ve}{\varepsilon}
\newcommand{\mO}{\mathcal{O}}
\newcommand{\mb}[1]{\mbox{#1}}
\newcommand{\m}[1]{\mathcal{#1}}
\newcommand{\mi}{\mbox{id}}
\newcommand{\bs}[1]{{\scriptsize\mbox{#1}}}
\newcommand{\ol}[1]{\overline{#1}}
\newcommand{\ull}[1]{\underline{#1}}
\newcommand{\lan}{\langle}
\newcommand{\ran}{\rangle}
\newcommand{\lb}{\left(}
\newcommand{\rb}{\right)}
\newcommand{\rw}{\rightarrow}
\newcommand{\wt}{\mbox{\rm wt}}
\theoremstyle{plain}
\newtheorem{theorem}{Theorem}[section]
\newtheorem{lem}[theorem]{Lemma}
\newtheorem{prop}[theorem]{Proposition}
\theoremstyle{remark}
\newtheorem{rem}[theorem]{Remark}
\theoremstyle{definition}
\newtheorem{defn}[theorem]{Definition}
\title[multi-parameter quantum groups via QQSA]
{Multi-parameter Quantum Groups via Quantum Quasi-symmetric Algebras}
\author[Li]{Yunnan Li}
\address{Department of Mathematics, East China Normal University,
Minhang Campus, Dong Chuan Road 500, Shanghai 200241, PR China}
\email{yunnan814@163.com}
\author[Hu]{Naihong Hu$^\star$}
\address{Department of Mathematics, East China Normal University,
Minhang Campus, Dong Chuan Road 500, Shanghai 200241, PR China}
\email{nhhu@math.ecnu.edu.cn}
\author[Rosso]{Marc Rosso}
\address{Universit\'e Paris Diderot - Paris VII, UFR de Math\'ematiques, Case 7012,
B\^atiment Chevaleret, 75205 Paris Cedex 13, France}
\email{rosso@math.jussieu.fr}
\thanks{$^\star$N.~H.,
corresponding author, supported in part by the NNSFC (Grant No.
 11271131).}
\subjclass[2010]{Primary 16T05, 17B37, 81R50; Secondary 17B35}
\keywords{Multi-brace cotensor Hopf algebra, quantum quasi-symmetric algebra, multi-parameter quantum groups, integrable irreducible representations}
\begin{document}

\begin{abstract}
 It is proved that the entire multi-parameter (small-)quantum groups of symmetrizable Kac-Moody algebras can be realized as certain subquotients of the cotensor Hopf algebras. This is an axiomatic construction. Hopf $2$-cocycle deformations variation for the construction machinery is described, moreover, the integrable irreducible modules can be constructed using this setting, even available for those in the fundamental alcove at root of unity case.
\end{abstract}
\maketitle

\section{Introduction}
\noindent
{\it 1.1.}
One of the central problems in quantum groups theory is to find nice realizations of them. After the original definition using generators and relations due to Drinfel’d-Jimbo, in the last decade of the 20th century, Ringel realized a half of the quantum group of finite type via the Hall algebras \cite{Ring}, and Rosso gave an axiomatic construction for a half of the quantum group using quantum shuffle algebras \cite{Ro1}. The former motivates Lusztig to establish the canonical bases theory \cite{Lus,Lus1}, and the latter re-ignites many mathematicians' interests in Nichols algebras, which is essential to the classification of finite-dimensional pointed Hopf algebras (see the work of Andruskiewitsch and Schneider \cite{AS}). Recently, two breakthroughs have been made in seeking the realization for the entire quantum groups. One is due to Bridgeland \cite{Bri}, in which he realized those quantum groups of ADE type via the Hall algebra of the abelian category of $\mathbb{Z}_2$-graded complexes of quiver representations. The other is due to Fang-Rosso \cite{FR}, where an axiomatic construction approach was proposed in virtue of quantum quasi-symmetric algebras.

\noindent
{\it 1.2.} Fang-Rosso's approach developed the notion of Nichols' cotensor coalgebra and was based on some relevant works. First of all, Loday-Ronco \cite{LR} proved a structure theorem for cofree Hopf algebras (analogous to the Cartier-Milnor-Moore theorem) stating that any cofree Hopf algebra is of the form $U2(R)$, where $R$ is a {\bfseries B}$_\infty$-algebra. Later, Jian-Rosso \cite{JR} gave a systematic construction of braided Hopf algebra structures on braided cofree coalgebras, where quantum multi-brace algebras were introduced as the braided version of {\bfseries B}$_\infty$-algebras. Particular interesting examples are quantum shuffle algebras (\cite{Ro1}) and quantum quasi-shuffle algebras (\cite{JRZ}), deeply related to quantum groups. Recently, Fang-Rosso \cite{FR} further dealt with the bosonization of quantum multi-brace algebras, where the two motivations were: the first is to classify all Hopf algebra structures on the cotensor coalgebra $T_H^c(M)$, where $H$ is a Hopf algebra and $M$ is an $H$-Hopf bimodule. The second is to realize the whole quantum groups as specializations of quantum quasi-symmetric algebras.

\noindent
{\it 1.3.}
Hu-Pei-Rosso \cite{PHR} studied the multi-parameter quantum groups associated to symmetrizable generalized Cartan matrices, together with their representations in  category $\mO^{\q}$. Based on the powerful classification theorem in \cite{FR}, the aim of this paper is to give a construction for the whole multi-parameter quantum groups in spirit of quantum quasi-symmetric algebras (in brief, QQSA). Moreover, any irreducible representation naturally appears as the right coinvariant subspace of the degree one component in the machinery.

\noindent
{\it 1.4.}
The paper is organized as follows. In Section 2, we recall the systematic construction of multi-brace cotensor Hopf algebras due to Fang-Rosso, especially an important example, the QQSAs. Section 3 recalls the multi-parameter quantum groups in \cite{PHR} and provides several useful results. We mention that as a crucial step when using the machinery to establish the injectivity of the corresponding Hopf algebra homomorphism, being distinct from the treatment in \cite{FR}, we need to describe the coradical of multi-parameter quantum groups to overcome the technical difficulty point (note the coradical filtration was used by Chin-Musson \cite{CM} for the quantum groups associated to finite Cartan matrices in literature).
In Section 4, we prove our main theorem, the QQSA realization of multi-parameter quantum groups. We further study the matching variation between Hopf $2$-cocycle deformations both for multi-parameter quantum groups and such construction machinery. In Section 5, we give the QQSA realization of the integrable irreducible representations. It is worthy to mention that the construction machinery as asserted by the third author is available both for the generic case and for the root of unity case when one deals with those weights lying in the fundamental alcove.

 Throughout the paper, we work over an algebraically closed field $\bK$ of characteristic $0$.

\section{Quantum quasi-symmetric algebra structure on cotensor coalgebras}

\noindent{\it 2.1.}
Now we briefly recall the construction of multi-brace cotensor Hopf algebras in \cite{FR}. Suppose $C$ is a coalgebra over field $\bK$, $M, N$ are two $C$-bicomodules, with the left and right comodule structure maps denoted by $\de_L,\de_R$. The cotensor product of $M,N$ is defined as the sub-bicomodule of $M\ot_\bK N$,
$M\Box_CN\triangleq\mb{Ker}(\de_R\ot\mi_N-\mi_M\ot\de_L)$.

\begin{defn}
Given a coalgebra $C$ and a $C$-bicomodule $M$, the \textit{cotensor coalgebra} $T_C^c(M)$ is a graded vector space $T_C^c(M)=\bigoplus_{k\geq0} T_C^c(M)_k$, where $T_C^c(M)_0=C,~T_C^c(M)_1=M,~T_C^c(M)_k$ $=M^{\Box_C^k},~k>1$. It has graded coproduct $\De_c: M^{\Box_C^n}\rw \sum_{i+j=n}M^{\Box_C^i}\ull{\ot} M^{\Box_C^j}$ defined as follows: $\De_c|_{T_C^c(M)_0}=\De_C$, while for any monomial $m_1\ot\cdots\ot m_n,~n\geq1$,
\[\begin{split}
\De_c&(m_1\ot\cdots\ot m_n)={m_1}_{(-1)}\ull{\ot}({m_1}_{(0)}\ot m_2\ot\cdots m_n)\\
&+(m_1 \ot\cdots\ot m_{n-1}\ot {m_n}_{(0)})\ull{\ot}{m_n}_{(1)}+\sum_{i=1}^{n-1}(m_1\ot\cdots\ot m_i)\ull{\ot}(m_{i+1}\ot\cdots\ot m_n),
\end{split}\]
and then linearly extended to the whole $M^{\Box_C^n}$.

Let $\pi:T_C^c(M)\rw C$ and $p:T_C^c(M)\rw M$ be the projection of $T_C^c(M)$ to its $0,1$-homogeneous components respectively. The counit of $T_C^c(M)$ is $\ve_C\pi$.
\end{defn}

$T_C^c(M)$ has the following universal property, which is essential for the consideration of its Hopf algebra structure:
\begin{prop}[\cite{Ni}]\label{uni}
Let $C,D$ be two coalgebras, $M$ is a $C$-bicomodule, $g:D\rw C$ is a coalgebra map, $f:D\rw M$ is a $C$-bicomodule map and $f(\mb{corad(D)})=0$, where the left and right $C$-comodule structure maps of $D$ are respectively $(g\ot\mi)\De_D,(\mi\ot g)\De_D$. There exists a unique coalgebra map $F:D\rw T_C^c(M)$ such that the following two commuting diagrams hold:
\[\xymatrix@=1.5em{D\ar@{->}[r]^-{F}\ar@{->}[dr]_-{g}&T_C^c(M)\ar@{->}[d]^-{\pi}\\
&C},\quad\xymatrix@=1.5em{D\ar@{->}[r]^-{F}\ar@{->}[dr]_-{f}&T_C^c(M)\ar@{->}[d]^-{p}\\
&M}.\]
\end{prop}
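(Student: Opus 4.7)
The strategy is to build $F$ degree-by-degree along the grading of $T_C^c(M)$ and then use the coradical filtration of $D$ to make the total sum well defined. I set $F_0:=g$, and for $n\geq 1$,
\[
F_n := f^{\ot n} \circ \De_D^{(n-1)} : D \rw M^{\ot n},
\]
where $\De_D^{(n-1)}$ is the $(n{-}1)$-fold iterated coproduct of $D$. The first task is to check that $F_n$ actually lands in the cotensor subspace $M^{\Box_C^n}$. This proceeds by induction on $n$: the assumption that $f$ is a $C$-bicomodule map with respect to the structure maps $(g\ot\mi)\De_D$ and $(\mi\ot g)\De_D$ on $D$ means that at each pair of adjacent tensorands the right $C$-coaction on one factor matches the left $C$-coaction on the next, which is precisely the cotensor condition cut out by $\mb{Ker}(\de_R\ot\mi-\mi\ot\de_L)$.

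Second, I would show that $F:=\sum_{n\geq 0}F_n$ is well defined pointwise. Writing $\{D_m\}_{m\geq 0}$ for the coradical filtration so that $D_0=\mb{corad}(D)$, the standard inclusion
\[
\De_D^{(n-1)}(D_m)\subset\sum_{i_1+\cdots+i_n=m}D_{i_1}\ot\cdots\ot D_{i_n}
\]
forces, whenever $n>m$, some index $i_j$ to vanish; that slot then lies in $\mb{corad}(D)$ and is annihilated by $f$. Hence $F_n(D_m)=0$ for $n>m$, and since $D=\bigcup_mD_m$ the sum $F(d)=\sum_nF_n(d)$ is finite for every $d$.

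Third, I would verify that $F$ is a coalgebra map. Decomposing the target through the bigrading $T_C^c(M)_i\ull{\ot}T_C^c(M)_j$, the identity $\De_c\circ F=(F\ot F)\circ\De_D$ reduces to the family of identities
\[
\mb{pr}_{i,j}\circ\De_c\circ F_{i+j}=(F_i\ot F_j)\circ\De_D \qquad (i,j\geq 0).
\]
The interior cases $i,j\geq 1$ are direct consequences of coassociativity of $\De_D$ together with $F_n=f^{\ot n}\De_D^{(n-1)}$, while the boundary cases $i=0$ or $j=0$ use that $g$ is a coalgebra map and that $f$ is a $C$-bicomodule map, so that the actions of $\de_L,\de_R$ on $F_n(d)$ reproduce exactly the two end terms in the formula for $\De_c$ in the preceding definition. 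The equalities $\pi F=g$ and $pF=f$ are immediate from $F_0=g$ and $F_1=f$, and counit compatibility $\ve_C\pi F=\ve_D$ follows from $g$ being a coalgebra map. For uniqueness, any other coalgebra map $F'=\sum F'_n$ meeting the two diagrams has $F'_0=\pi F'=g$ and $F'_1=pF'=f$, after which the coalgebra-map identity together with induction on $n$ recovers the same recursive formula $F'_n=f^{\ot n}\De_D^{(n-1)}$.

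The main obstacle I anticipate is the coalgebra-map verification: the coproduct on $T_C^c(M)$ has three qualitatively different types of summands (two boundary contributions from $\de_L,\de_R$ and the middle splittings $(m_1\ot\cdots\ot m_i)\ull{\ot}(m_{i+1}\ot\cdots\ot m_n)$), so the bookkeeping of the iterated coproduct on $D$ at the junctions where $g$ and $f$ interact through the bicomodule structure is the delicate point of the argument.
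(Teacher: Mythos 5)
Your argument is correct and coincides with what the paper intends: the paper gives no proof of Proposition \ref{uni} (it cites Nichols) but immediately records the explicit formula (\ref{mu}) $F=g+\sum_{n\geq1}f^{\ot n}\De_D^{(n-1)}$, which is exactly your construction, and your verifications (image in $M^{\Box_C^n}$ via the bicomodule property of $f$ and coassociativity, local finiteness via $f(\mb{corad}(D))=0$ and the coradical filtration, the bigraded comparison of $\De_c\circ F$ with $(F\ot F)\De_D$, and uniqueness by induction on degree) are the standard and sound way to justify it.
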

Note that the coalgebra map $F$ in Proposition \ref{uni} has the following explicit formula:
\begin{equation}\label{mu}
F=g+\sum_{n\geq1}f^{\ot n}\De_D^{(n-1)},
\end{equation}
where $\De_D^{(0)}=\mi_D,~\De_D^{(n)}=(\mi_D^{\ot n-1}\ot\De_D)\De_D^{(n-1)}$.

\begin{lem}
The coalgebra map $F:D\rw T_C^c(M)$ in Proposition \ref{uni} is also an $H$-bicomodule map.
\end{lem}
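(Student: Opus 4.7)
The plan is to reduce this to two identities already in hand: the coalgebra property $\Delta_c\circ F=(F\ot F)\Delta_D$ and the commuting triangle $\pi\circ F=g$ of Proposition \ref{uni} (here, as throughout this part of the paper, $C=H$ is a Hopf algebra and $M$ an $H$-Hopf bimodule, so that the cotensor coalgebra $T_H^c(M)$ carries a canonical $H$-bicomodule structure via the projection $\pi$).

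First I would unwind the definitions. The right $H$-coaction on $T_H^c(M)$ is $\de_R^{T}=(\mi\ot\pi)\De_c$ and on $D$ it is $\de_R^D=(\mi\ot g)\De_D$. Hence $F$ is a right $H$-comodule map precisely when
\[
(\mi\ot\pi)\De_c\circ F \;=\; (F\ot g)\De_D.
\]
Using that $F$ is a coalgebra map, the left-hand side equals $(\mi\ot\pi)(F\ot F)\De_D=(F\ot \pi F)\De_D$, and since $\pi F=g$ by the first triangle of Proposition \ref{uni}, this is exactly $(F\ot g)\De_D$. The argument for the left $H$-coaction is entirely symmetric, using $(\pi\ot\mi)\De_c$ on $T_H^c(M)$ and $(g\ot\mi)\De_D$ on $D$, together with the same two identities.

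Since the verification is this short, I do not foresee any real obstacle; the only thing to watch is a coherence check, namely that the $H$-bicomodule structure implicitly chosen on $T_H^c(M)$ is the one coming from $\pi$ (equivalently, the concentration of the coaction on the degree-$0$ part $H\subset T_H^c(M)$), rather than, say, a bimodule-induced one coming from the Hopf bimodule structure of $M$. Once this is pinned down, the lemma follows by the two-line computation above, and can then be applied in the sequel to upgrade $F$ from a coalgebra map to a morphism of $H$-bicomodule coalgebras whenever we need to compare bosonizations.
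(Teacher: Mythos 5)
Your proof is correct and takes essentially the approach the paper intends: the paper's own proof begins by invoking the triangle identity $\pi\circ F=g$ (misprinted there as $g\circ F=\pi$) together with the description of the comodule structure maps of $T_C^c(M)$, but is left unfinished in the source, and your two-line computation $(\mi\ot\pi)\De_c F=(F\ot\pi F)\De_D=(F\ot g)\De_D$ using the coalgebra-map property of $F$ is exactly the missing completion. Your coherence check is also resolved in your favor: by the explicit formula for $\De_c$, the coactions $(\pi\ot\mi)\De_c$ and $(\mi\ot\pi)\De_c$ agree with the bicomodule structure $\de_L\ot\mi_M^{\ot n-1}$, $\mi_M^{\ot n-1}\ot\de_R$ that the paper fixes on $M^{\Box_H^n}$ in Section 2.2.
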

\begin{proof}
Note that $g\circ F=\pi$, and the left $H$-comodule structure map of $T_C^c(M)$
\end{proof}

\noindent{2.2.}
In particular, Fang-Rosso in \cite{FR} considered a Hopf bimodule $M$ of a Hopf algebra $H$. For $n\geq1$, $M^{\Box_H^n}$ inherits the tensor product $H$-bimodule structure of $M^{\ot n}$, while its $H$-bicomodule structure is given by the map $\de_L\ot\mi_M^{\ot n-1},~\mi_M^{\ot n-1}\ot\de_R$. Now $M^{\Box_H^n}$ becomes an $H$-Hopf bimodule, so does $T_H^c(M)$. Now let $D=T_H^c(M)\ull{\ot}T_H^c(M)$, endowed with the usual coproduct of tensor products of coalgebras. $D$ is also an $H$-Hopf bimodule, where the bimodule structure arises from the external components and the bicomodule structure comes from the tensor product. From the discussion in \cite{FR}, we know that when the maps $f,g$ satisfy certain conditions, $T_H^c(M)$ becomes a Hopf algebra with $F$ as the multiplication map. Such Hopf algebra structure on $T_H^c(M)$ is called the \textit{multi-brace cotensor Hopf algebra}.

Here we will not explain any more, but introduce one special Hopf algebra structure on $T_H^c(M)$, called the quasi-symmetric type. For any $p,q\in\bN$, denote $f_{pq},g_{pq}$ respectively as the restriction of $f,g$ on homogeneous $M^{\Box_H^p}\ull{\ot} M^{\Box_H^q}$.
\begin{defn}[{\cite[Def.7, Cor.1]{FR}}]\label{qsa}
The pair $(f,g)$ in Proposition \ref{uni} is called of the \textit{quasi-symmetric} type, if

(1) $g_{00}=m_H,~g_{pq}=0,~\forall (p,q)\neq(0,0)$, where $m_H$ is the multiplication of $H$.

(2) $f_{01}=a_L,~f_{10}=a_R,~f_{pq}=0,~\forall (p,q)\neq(0,1),(1,0),(1,1)$, where $a_L, \,a_R$ are the left and right $H$-module structure maps of $M$, respectively.

(3) $f_{11}=\al$ satisfies the following conditions:
$\al$ can factor through $M\ull{\ot}_HM$ and becomes a $H$-Hopf bimodule map.
In addition, $\al$ is associative, i.e. $\al(\al\ot\mi_M)=\al(\mi_M\ot\al)$.

When the pair $(f,g)$ is of the quasi-symmetric type, the cotensor coalgebra $T_H^c(M)$ becomes a Hopf algebra. Its Hopf subalgebra $Q_H(M)$ generated by the 0,1 homogeneous components $H,M$ is called the \textit{quantum quasi-symmetric algebra} and referred to as the QQSA for short.
\end{defn}
Moreover, the right coinvariant subspace $Q_\si(V)$ of $Q_H(M)$ is a braided Hopf algebra with $V=M^{\bs{co}\pi}$. $Q_\si(V)$ is exactly a Hopf subalgebra of the quantum quasi-shuffle algebra $T_{\bowtie_\si}(V)$ defined in \cite{JRZ}, generated by the primitive space $V$.
Conversely, $Q_H(M)$ serves as the bosonization of $Q_\si(V)$. In particular, if the map $\al=0$, then $Q_H(M)$ degenerates to be the \textit{quantum symmetric algebra} $S_H(M)$, whose right coinvariant subspace $S_\si(V)$ is just a Nichols algebra (or quantum shuffle algebra \cite{Ro1}).

\section{Miscellany for multi-parameter quantum groups}

In this section, we first recall the definition of multi-parameter quantum groups uniformly defined by Pei-Hu-Rosso in \cite{PHR}, and then give some useful results.

\noindent{3.1.}
Let $\mg_A$ be a symmetrizable Kac-Moody algebra over $\bQ$  and
$A=(a_{ij})_{i,j\in I}$ be an associated  generalized Cartan matrix.
Let $d_i$ be relatively prime positive integers such that
$d_ia_{ij}=d_{j}a_{ji}$ for $i,j\in I$. Let $\Phi$ be the root
system, $\Pi=\{\al_i\mid i\in I\}$ a set of simple roots,
$Q=\bigoplus_{i\in I}\bZ\al_i$ the root lattice, and then with
respect to $\Pi$, we have $\Phi^+$ the system of positive roots,
$Q^+=\bigoplus_{i\in I}\bZ_+\al_i$ the positive root lattice, $\La$
the weight lattice, and $\La^+$ the set of dominant weights. Let
$q_{ij}$ be indeterminates over $\bQ$ and $\bQ(q_{ij}\;|\;i,j\in I)$
be the fraction field of polynomial ring $\bQ[q_{ij}\;|\;i,j\in I]$
such that
\begin{equation}\label{car}
q_{ij}q_{ji}=q_{ii}^{a_{ij}}.
\end{equation}
Now suppose that $\bQ(q_{ij\;|\;i,j\in\ I})\subset\bK$ and
$q_{ii}^{\frac{1}{m}}\in\bK$ for $m\in\bZ_{+}$. 
Denote $\q :=(q_{ij})_{i,j\in I}$. For $n>0$, define
$$
(n)_v=\frac{v^n-1}{v-1}.
$$
$$
(n)_v!=(n)_v\cdots(2)_v(1)_v, \quad \textit{and}\quad (0)_v!=1.
$$
$$
\binom{n}{k}_v=\frac{(n)_v!}{(k)_v!(n-k)_v!}.
$$
in $\bZ[v,v^{-1}]$. When all $q_{ii}(i\in I)$ are not roots of unity, we refer to it as the \textit{generic} case.

\smallskip
\begin{defn}\label{defi} (\cite{PHR})
The multi-parameter quantum group $U_{\q}(\mg_A)$ is an associative
algebra over $\bK$ with $1$ generated by the elements $e_i, f_i,
\om_i^{\pm1}, \om_i'^{\pm1} \ (i\in I)$, subject to the relations:
\begin{eqnarray*}
 &(R1)&\quad\om_i^{\pm1}\om_j'^{\pm1}=\om_j'^{\pm1}\om_i^{\pm1},\quad
\om_i^{\pm1}\om_i^{\mp1}=\om_i'^{\pm1}\om_i'^{\mp1}=1,\\
 &(R2)&\quad\om_i^{\pm1}\om_j^{\pm1}=\om_j^{\pm1}\om_i^{\pm1},
 \quad\om_i'^{\pm1}\om_j'^{\pm1}=\om_j'^{\pm1}\om_i'^{\pm1},\\
&(R3)&\quad \om_ie_j\om_i^{-1}=q_{ij} e_j,
\qquad\om_i'e_j\om_i'^{-1}=q_{ji}^{-1} e_j,
\\
&(R4)&\quad\om_if_j\om_i^{-1}=q_{ij}^{-1} f_j,\qquad
\om_i'f_j\om_i'^{-1}=q_{ji} f_j,\\
&(R5)&\quad[\,e_i, f_j\,]=\delta_{i,j}\frac{q_{ii}}{q_{ii}-1}({\om_i-\om_i'}),\\
&(R6)&\quad \sum_{k=0}^{1-a_{ij}} (-1)^{k}
\binom{1-a_{ij}}{k}_{q_{ii}}q_{ii}^{\frac{k(k-1)}{2}}
q_{ij}^{k}e_{i}^{1-a_{ij}-k} e_{j} e_{i}^{k} =0 \quad (i\ne j),\\
&(R7)&\quad\sum_{k=0}^{1-a_{ij}} (-1)^{k}
\binom{1-a_{ij}}{k}_{q_{ii}}q_{ii}^{\frac{k(k-1)}{2}}
q_{ij}^{k}f_{i}^{k} f_{j} f_{i}^{1-a_{ij}-k}=0 \quad (i\ne j).
\end{eqnarray*}
\end{defn}

Let $U_{\q}^+$ (resp., $U_{\q}^-$) be the subalgebra of
$U_{\q}$ generated by the elements $e_i$ (resp., $f_i$) for
$i\in I$,  $U_{\q}^{+0}$ (resp., $U_{\q}^{-0}$) the
subalgebra of $U_{\q}$ generated by $\om_i^{\pm1}$ (resp.,
$\om_i'^{\pm1}$)  for $i\in I$. Let $U_{\q}^{0}$ be the subalgebra
of $U_{\q}$ generated by $\om_i^{\pm1},\om_i'^{\pm1}$  for $i\in I$.
Moreover, Let $U_{\q}^{\leq0}$ (resp., $U_{\q}^{\geq0}$) be
the subalgebra of $U_{\q}$ generated by the elements
$e_i,\om_i^{\pm1}$ for $i\in I$ (resp., $f_i, \om_i'^{\pm1}$
for $i\in I$). It is clear that $U_{\q}^0, {U}_{\q}^{\pm0}$ are
commutative algebras.  
For each $\mu\in\La$, we can
define the elements $\om_{\mu}$ and $\om_{\mu}'$ by
$$
\om_{\mu}=\prod_{i\in I}\om_{i}^{\mu_i},\qquad
\om'_{\mu}=\prod_{i\in I}{\om'_{i}}^{\mu_i}
$$
if $\mu=\sum_{i\in I}\mu_i\al_i\in \La$. For any $\mu,\ \nu\in \La$, we
denote
$$
q_{\mu\nu}:=\prod_{i,j\in I} q_{ij}^{\mu_i\nu_j}
$$
if $\mu=\sum_{i\in I}\mu_i \al_i$ and $\nu=\sum_{j\in I}\nu_j \al_j$.
Let
$$
\deg e_i=\al_i,\quad \deg f_i=-\al_i,\quad \deg \om_i^{\pm1}=\deg
\om_i'^{\pm1}=0,
$$
then one can define the $Q^{\pm}$-graded space decomposition of $U_{\q}^\pm$:
\begin{equation}\label{root}
U_{\q}^\pm=\bigoplus\limits_{\al\in Q^+}(U_{\q}^\pm)_{\pm\al}.
\end{equation}
Similarly, we have the $Q^{\pm}$-graded space decompositions of $U_{\q}^{\geq0}$ and $U_{\q}^{\leq0}$:
\begin{equation}\label{root1}
U_{\q}^{\geq0}=\bigoplus\limits_{\al\in Q^+}(U_{\q}^{\geq0})_\al,~U_{\q}^{\leq0}=\bigoplus\limits_{\al\in Q^+}(U_{\q}^{\leq0})_{-\al}.
\end{equation}

\medskip
Here list some useful results about $U_{\q}$ provided in \cite{PHR}.

\begin{prop}[{\cite[Prop.8]{PHR}}]\label{Hopf}
The associative algebra $U_{\q}(\mg)$ has a Hopf algebra structure
with the comultiplication, the counit and the antipode given by:
\begin{eqnarray*}
&& \Delta(\om_i^{\pm1})=\om_i^{\pm1}\ot\om_i^{\pm1}, \qquad
\Delta({\om_i'}^{\pm1})={\om_i'}^{\pm1}\ot{\om_i'}^{\pm1},\\
&&\Delta(e_i)=e_i\ot 1+\om_i\ot e_i, \qquad \Delta(f_i)=1\ot
f_i+f_i\ot \om_i',\\
&&\ve(\om_i^{\pm1})=\ve({\om_i'}^{\pm1})=1, \qquad\quad
\ve(e_i)=\ve(f_i)=0, \\
&&S(\om_i^{\pm1})=\om_i^{\mp1}, \qquad\qquad\qquad
S({\om_i'}^{\pm1})={\om_i'}^{\mp1},
\\
&&S(e_i)=-\om_i^{-1}e_i,\qquad\qquad\qquad
S(f_i)=-f_i\,{\om_i'}^{-1}.
\end{eqnarray*}
\end{prop}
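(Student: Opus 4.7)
The plan is to extend the formulas specified on the generators $e_i, f_i, \om_i^{\pm 1}, {\om_i'}^{\pm 1}$ to maps on the free algebra on these symbols, declaring $\De$ and $\ve$ to be algebra homomorphisms and $S$ an anti-algebra homomorphism. One then shows that each descends to the quotient $U_\q(\mg_A)$ by verifying that every defining relation (R1)--(R7) is carried to an identity, and finally checks coassociativity, the counit axiom, and the convolution identity $S*\mi=\mi*S=\eta\ve$ on generators, after which they extend to all of $U_\q$ by multiplicativity.

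\textbf{Easy relations and the commutator (R5).} Relations (R1)--(R4) are immediate: since $\De(\om_i^{\pm1})$ and $\De({\om_i'}^{\pm1})$ are group-likes, (R1) and (R2) reduce to commutativity of tensor products of group-likes, while (R3) and (R4) follow by applying the original commutation laws on each tensor factor. For (R5), a direct expansion gives
\[
[\De(e_i),\De(f_j)] = (e_if_j - f_je_i)\ot \om_j' + \om_i\ot(e_if_j-f_je_i) + \bigl(\om_i f_j\ot e_i\om_j' - f_j\om_i\ot\om_j' e_i\bigr),
\]
and (R3) together with (R4) force the parenthesised cross term to vanish. For $i=j$ the remaining expression equals $\frac{q_{ii}}{q_{ii}-1}(\om_i\ot\om_i - \om_i'\ot\om_i') = \De\bigl(\frac{q_{ii}}{q_{ii}-1}(\om_i-\om_i')\bigr)$, and for $i\neq j$ it is zero.

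\textbf{Main obstacle: Serre relations and antipode.} The technical heart is the verification of (R6) and (R7) under $\De$. I would first observe that in $U_\q\ot U_\q$ the two summands of $\De(e_i)$ satisfy $(\om_i\ot e_i)(e_i\ot 1) = q_{ii}\,(e_i\ot 1)(\om_i\ot e_i)$, so the quantum binomial formula applies to the power $\De(e_i)^m$. Substituting into the Serre sum and using the Cartan constraint (\ref{car}) to rewrite mixed scalars $q_{ij}^k q_{ji}^{1-a_{ij}-k}$ as powers of $q_{ii}$, the $q_{ii}$-Pascal identity collapses the cross terms into lower-degree Serre expressions, which vanish by the defining relations of $U_\q$; (R7) is handled by the parallel computation with the opposite comultiplication convention. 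For $S$, the relations must be checked in reversed order (the Serre case reuses the same quantum binomial manipulation), and the convolution axiom reduces to generator-level checks, e.g.\ for $e_i$ one has $S(e_i)\cdot 1 + S(\om_i)\cdot e_i = -\om_i^{-1}e_i + \om_i^{-1}e_i = 0 = \ve(e_i)$, with symmetric verifications for $f_i$ and for the group-likes. Coassociativity and the counit axiom are immediate on generators from the explicit formulas, and extend since $\De$ and $\ve$ are algebra maps. The step I expect to cost the most is the $q$-binomial bookkeeping in (R6)/(R7), where the asymmetry between $q_{ij}$ and $q_{ji}$ in the multi-parameter setting (rather than a single $q$) is the only feature distinguishing the argument from the standard Drinfel'd--Jimbo case.
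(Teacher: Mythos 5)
The paper does not prove this proposition at all: it is quoted verbatim from \cite[Prop.~8]{PHR}, so there is no in-paper argument to compare against. Your proposed verification is the standard one (extend $\De,\ve$ multiplicatively and $S$ anti-multiplicatively to the free algebra, check the relations, then verify the Hopf axioms on generators), and the steps you actually carry out are correct: the cross term in your expansion of $[\De(e_i),\De(f_j)]$ does cancel, since (R4) gives $\om_if_j=q_{ij}^{-1}f_j\om_i$ while (R3) gives $e_i\om_j'=q_{ij}\om_j'e_i$, and the convolution check $S(e_i)\cdot 1+S(\om_i)e_i=0$ together with the subalgebra property of $\{a:\,(S*\mi)(a)=(\mi*S)(a)=\eta\ve(a)\}$ is exactly how one globalizes the antipode axiom.

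One point in your sketch of (R6)/(R7) is misstated and should be repaired before you rely on it. You say the $q_{ii}$-Pascal identity collapses the cross terms of $\De(u_{ij}^+)$ ``into lower-degree Serre expressions, which vanish by the defining relations of $U_\q$.'' The elements $\mb{ad}_l(e_i)^t(e_j)$ with $t<1-a_{ij}$ are \emph{not} zero in $U_\q$ --- only $t=1-a_{ij}$ is a defining relation --- so if those terms survived with nonzero coefficients the verification would fail. The correct mechanism is that their scalar coefficients vanish identically: the $q$-binomial bookkeeping produces factors of the form $\prod_{k}(1-q_{ii}^kq_{ij}q_{ji})$, and the Cartan constraint (\ref{car}), $q_{ij}q_{ji}=q_{ii}^{a_{ij}}$, kills the factor with $k=-a_{ij}$ precisely when the exponent reaches $1-a_{ij}$. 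This is the same vanishing that the paper exploits in the proof of Theorem \ref{iso}, where $\mb{ad}_l(E_i)^{s}(E_j)$ acquires the scalar $(s)_{q_{ii}}!\prod_{k=1}^{s-1}(1-q_{ii}^kq_{ij}q_{ji})$. With that correction, $u_{ij}^{\pm}$ is skew-primitive already before imposing the Serre relations, which is exactly what you need for $\De$ (and likewise $S$) to descend to the quotient; the rest of your outline then goes through.
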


\begin{theorem}[{\cite[Th.20]{PHR}}]\label{skew1}
There exists a unique bilinear pairing
$\lan\,,\,\ran_{\q}:\, U_{\q}^{\leq0}\times U_{\q}^{\geq0}\to \bK$
such that for all $x,\, x'\in U_{\q}^{\geq0}$, $y,\, y'\in
U_{\q}^{\leq0}$, $\mu,\, \nu\in Q$, and $i,\, j\in I$
\begin{eqnarray*}
&&\lan y,\,xx' \ran_{\q}=\lan\De(y),\,x'\ot x\ran_{\q},\\
&&\lan yy',\,x \ran_{\q}=\lan y\ot y',\,\De(x)\ran_{\q},\\
&&\lan f_i,\,e_j\ran_{\q}=\delta_{ij}\frac{q_{ii}}{1-q_{ii}},\\
&& \lan\om_{\mu}',\,\om_{\nu}\ran_{\q}=q_{\nu\mu},\\
&& \lan\om_{\mu}',\,e_i\ran_{\q}=0,\\
&& \lan f_i,\,\om_{\mu}\ran_{\q}=0.
\end{eqnarray*}
\end{theorem}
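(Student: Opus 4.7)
The plan is to establish the pairing as a Drinfel'd-type skew-Hopf pairing, proceeding in the standard three stages: fix values on the generators, propagate to arbitrary elements via the two compatibility axioms, and then show that the defining relations of $U_{\q}$ lie in the radical on each side. Uniqueness is immediate, since $U_{\q}^{\geq 0}$ is generated as an algebra by $e_i$ and $\om_i^{\pm 1}$ (and $U_{\q}^{\leq 0}$ by $f_i,\om_i'^{\pm 1}$): the axioms $\lan y,xx'\ran_{\q}=\lan\De(y),x'\ot x\ran_{\q}$ and $\lan yy',x\ran_{\q}=\lan y\ot y',\De(x)\ran_{\q}$, together with the listed values on generators, determine the pairing by induction on the length of monomials.

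For existence, I would first construct the pairing on the free (or rather ``pre-Serre'') versions of $U_{\q}^{\geq 0}$ and $U_{\q}^{\leq 0}$, and then verify that it factors through all the defining relations. Since $U_{\q}^{+0}$ and $U_{\q}^{-0}$ are group algebras of the lattice $\La$, the prescription $\lan\om'_\mu,\om_\nu\ran_{\q}=q_{\nu\mu}$ extends unambiguously to a Hopf pairing of these torus parts because $q_{\mu\nu}=\prod_{i,j} q_{ij}^{\mu_i\nu_j}$ is bilinear in $\mu,\nu$. To extend across the $Q^+$-grading (\ref{root1}), I would use induction on total weight by exploiting that $\De(e_i)=e_i\ot 1+\om_i\ot e_i$ and $\De(f_i)=1\ot f_i+f_i\ot\om_i'$ each have only two summands (Proposition \ref{Hopf}), so the recursions for $\lan f_{i_1}\cdots f_{i_k},x\ran_{\q}$ and $\lan y,e_{j_1}\cdots e_{j_\ell}\ran_{\q}$ reduce strictly to lower bidegree; consistency of the two ways of expanding $\lan yy',xx'\ran_{\q}$ is then guaranteed by coassociativity of $\De$ together with the torus compatibilities.

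The main obstacle is showing that all of $(R1)$--$(R7)$ lie in the radical. Relations $(R1)$--$(R4)$ reduce routinely to (\ref{car}), i.e.\ $q_{ij}q_{ji}=q_{ii}^{a_{ij}}$, and the bilinearity of $q_{\mu\nu}$; relation $(R5)$ is forced by the normalization $\lan f_i,e_j\ran_{\q}=\de_{ij}q_{ii}/(1-q_{ii})$ combined with the two-term shapes of $\De(e_j)$ and $\De(f_i)$, which produces precisely the commutator with prefactor $q_{ii}/(q_{ii}-1)$. The hard step is the quantum Serre relations: denoting the Serre element by $s_{ij}^+=\sum_{k=0}^{1-a_{ij}}(-1)^k\binom{1-a_{ij}}{k}_{q_{ii}}q_{ii}^{k(k-1)/2}q_{ij}^k e_i^{1-a_{ij}-k}e_je_i^k$, one must verify $\lan y,s_{ij}^+\ran_{\q}=0$ for all $y\in U_{\q}^{\leq 0}$ (and dually for $(R7)$). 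By the multiplicativity axiom it suffices to test against monomials in the $f_\ell$'s of weight $-((1-a_{ij})\al_i+\al_j)$, and after repeatedly applying $\lan yy',x\ran_{\q}=\lan y\ot y',\De(x)\ran_{\q}$ and using $q_{ij}q_{ji}=q_{ii}^{a_{ij}}$ to align cross terms, the vanishing reduces to a $q_{ii}$-binomial identity of Cauchy/Pascal type, i.e.\ the standard $\sum_k(-1)^k\binom{n}{k}_v v^{k(k-1)/2}(\cdots)=0$. This Serre verification is where the two-parameter structure constants and the constraint (\ref{car}) are essentially used, and it is the only place in the argument that is not formal bookkeeping.
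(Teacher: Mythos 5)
The paper does not prove this statement: Theorem~\ref{skew1} is quoted verbatim from \cite[Th.~20]{PHR} and used as a black box, so there is no in-paper proof to compare against. Your proposal is the standard construction of a Drinfel'd-type skew-Hopf pairing (values on generators, extension via the two (co)multiplicativity axioms, verification that the defining ideals lie in the radical, with the quantum Serre elements handled by a $q_{ii}$-binomial identity after invoking $q_{ij}q_{ji}=q_{ii}^{a_{ij}}$), and this is essentially the route taken in the cited source, so the approach is sound. One small conceptual correction: (R5) is a cross relation of the full algebra $U_{\q}$, not a relation of either $U_{\q}^{\geq0}$ or $U_{\q}^{\leq0}$, so it does not need to be shown to lie in the radical for the pairing to exist; rather, the normalization $\lan f_i,e_j\ran_{\q}=\delta_{ij}q_{ii}/(1-q_{ii})$ is what makes the Drinfel'd double $\mathcal{D}(U_{\q}^{\geq0},U_{\q}^{\leq0},\lan\,,\,\ran_{\q})$ reproduce (R5) afterwards. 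The only other point worth tightening is well-definedness of the extension to products: the clean way is to define the pairing first on the free (pre-Serre) algebras via the universal property and only then pass to the quotient, rather than defining it recursively and appealing to coassociativity for consistency.
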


\begin{prop}[{\cite[Prop.44]{PHR}}]\label{non}
For each $\beta\in Q^+$, the restriction of
pairing $\lan\, ,\, \ran_{\q}$ to $(U^-_{\q})_{-\be}\times
(U^+_{\q})_{\be}$ is nondegenerate.
\end{prop}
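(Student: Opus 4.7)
The plan is to show that the left and right radicals of the restricted pairing on $(U^-_\q)_{-\be} \times (U^+_\q)_\be$ are both trivial. I would proceed in three steps: first, verify that the pairing is homogeneous with respect to the $Q$-grading; second, identify the radicals on each side as graded Hopf ideals of $U^\pm_\q$; third, show these ideals vanish via the (multi-parameter) quantum shuffle realization of $U^\pm_\q$.

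For the homogeneity step, I would check that $\lan (U^-_\q)_{-\mu}, (U^+_\q)_\nu\ran_\q = 0$ whenever $\mu \ne \nu$ in $Q^+$, by induction on the total length of monomials in the generators $f_i$ and $e_j$. The inductive step uses the product/coproduct compatibilities from Theorem \ref{skew1} together with the coproduct formulas in Proposition \ref{Hopf} and the base values
\[\lan f_i,\,e_j\ran_\q = \de_{ij}\frac{q_{ii}}{1-q_{ii}},\qquad \lan f_i,\,\om_\mu\ran_\q = 0,\qquad \lan\om'_\mu,\,e_i\ran_\q = 0.\]
Next, define
\[R^+_\be = \{\,x \in (U^+_\q)_\be \mid \lan y,\,x\ran_\q = 0,\ \forall\, y \in (U^-_\q)_{-\be}\,\},\]
with $R^+ = \bigoplus_{\be \in Q^+} R^+_\be$ and $R^-$ defined symmetrically. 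The bialgebra-pairing identities $\lan yy',\,x\ran_\q = \lan y\ot y',\,\De(x)\ran_\q$ and $\lan y,\,xx'\ran_\q = \lan\De(y),\,x'\ot x\ran_\q$, combined with homogeneity, force $R^\pm$ to be graded two-sided ideals as well as two-sided coideals compatible with the antipode, hence graded Hopf ideals of $U^\pm_\q$.

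The main step, and the principal obstacle, is showing $R^+ = R^- = 0$. For this, I would invoke Rosso's realization together with its multi-parameter extension: there is a Hopf algebra map $\Phi^+:U^+_\q \rw T_\si(V^+)$ into the quantum shuffle algebra on $V^+ = \bigoplus_{i \in I}\bK e_i$ with braiding $\si(e_i \ot e_j) = q_{ij}\,e_j \ot e_i$, sending $e_i \mapsto e_i$; likewise a map $\Phi^-$ on the negative part. Under these embeddings, the Hopf pairing $\lan\cdot,\cdot\ran_\q$ is identified with the standard non-degenerate quantum shuffle pairing, so that $R^+$ coincides with $\mb{Ker}\,\Phi^+$ and $R^-$ with $\mb{Ker}\,\Phi^-$. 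The hard part is verifying that the quantum Serre relations (R6), (R7) are precisely what generates the kernels of $\Phi^\pm$ in the multi-parameter setting (compare \cite{Ro1}); once this is in hand, the relations being imposed already in the definition of $U^\pm_\q$ forces $\mb{Ker}\,\Phi^\pm = 0$, hence $R^\pm = 0$, completing the proof of non-degeneracy on each graded piece $(U^-_\q)_{-\be} \times (U^+_\q)_\be$.
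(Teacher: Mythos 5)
First, note that the paper does not prove this statement at all: Proposition \ref{non} is quoted verbatim from \cite[Prop.~44]{PHR} and used as an imported ingredient (it enters the proof of Proposition \ref{cor} to kill skew-primitives of height $\geq 2$). So the relevant comparison is with the proof in \cite{PHR}, which does indeed run along the lines you sketch: that paper's whole point is the quantum shuffle realization of $U_\q^{\pm}$, and nondegeneracy of $\lan\,,\,\ran_\q$ on each graded piece is obtained from it. Your preliminary steps are sound: orthogonality of $(U^-_\q)_{-\mu}$ and $(U^+_\q)_{\nu}$ for $\mu\neq\nu$ follows by the induction you describe, and the radicals $R^{\pm}$ are graded biideals (one small imprecision: $U^{\pm}_\q$ is only a braided Hopf algebra, so ``Hopf ideal of $U^{\pm}_\q$'' should either be read in the braided sense or the argument should be run inside $U^{\geq0}_\q$, $U^{\leq0}_\q$; this is cosmetic).

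The genuine gap is the step you yourself flag as ``the hard part,'' and it is not a deferrable technicality --- it is the entire content of the proposition. The image of $\Phi^+$ in the quantum shuffle algebra $T_\si(V^+)$ is by construction the Nichols algebra, i.e.\ the quotient of the free braided algebra on $V^+$ by the radical of the pairing; hence the identification $R^+=\mb{Ker}\,\Phi^+$ is essentially a tautology, and the assertion $\mb{Ker}\,\Phi^{\pm}=0$ is logically equivalent to the statement that the multi-parameter quantum Serre ideal coincides with the radical. Your proposal therefore reformulates the proposition rather than proving it. To close the gap one must actually show that the Serre elements $u^{\pm}_{ij}$ lie in the radical (a direct computation with the pairing, using the constraint $q_{ij}q_{ji}=q_{ii}^{a_{ij}}$ of (\ref{car})) \emph{and}, much harder, that in the generic case they generate all of it --- which in \cite{Ro1} and \cite{PHR} is done by relating the radical to a Kac--Moody-type character/dimension argument (comparing graded dimensions of the Nichols algebra with those of $U(\mathfrak{n}^+)$), not by the formal bookkeeping above. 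Without that input the proof is incomplete; with it, your route is exactly the one taken in the cited source.
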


\noindent{3.2.}
Moreover, we need the following important notion.
\begin{defn}
Let $C=\bigoplus_{i\in\bN} C(i)$ be a graded coalgebra with the coradical filtration $\{C_m\}_{m\in\bN}$. We say that $C$ is \textit{coradically graded} if $C_0=C(0)$, $C_1=C(0)\op C(1)$. It makes $C_m=\bigoplus\limits_{i=0}^mC(i)$.
\end{defn}


The tensor product $C\ot D$ of two graded coalgebras $C,~D$ is naturally a graded coalgebra, with $(C\ot D)(m)=\bigoplus_{i+j=m}C(i)\ot D(j)$ for any $m\in\bN$.
For a later use to fit in with the setting of multi-parameter quantum groups, we need to establish a key lemma, which is distinct from but analogous to \cite[Lemma 2.3]{CM}.
\begin{lem}\label{cog}
Let $C,D$ be two pointed coalgebras over $\bK$. If they are coradically graded, then the tensor product $C\ot D$ is also coradically graded.
\end{lem}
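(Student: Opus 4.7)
The plan is to prove directly that $(C\ot D)_m=\bigoplus_{p+q\le m}C(p)\ot D(q)$ for all $m\in\bN$; this subsumes the two defining conditions of coradical gradedness at $m=0,1$. The base case uses pointedness: since $C$ and $D$ are pointed, every group-like of $C\ot D$ is of the form $g\ot h$ with $g\in G(C)$, $h\in G(D)$, so $(C\ot D)_0=\bK G(C\ot D)=C_0\ot D_0=C(0)\ot D(0)$. Write $F_m:=\bigoplus_{p+q\le m}C(p)\ot D(q)$; the claim is $(C\ot D)_m=F_m$, proved by induction on $m$.

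For $(C\ot D)_m\subseteq F_m$, I would first verify that $\{F_m\}$ is a coalgebra filtration: since $C$ and $D$ are graded coalgebras, $\De C(p)\subseteq\bigoplus_{p_1+p_2=p}C(p_1)\ot C(p_2)$ and analogously for $D$, so a direct expansion of $\De(c\ot d)$ for $c\in C(p)$, $d\in D(q)$ places each summand in some $F_a\ot F_b$ with $a+b\le p+q$. Because $F_0=(C\ot D)_0$, the minimality of the coradical filtration among coalgebra filtrations with the prescribed zeroth term forces $(C\ot D)_m\subseteq F_m$ for every $m$.

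For the reverse $F_m\subseteq(C\ot D)_m$, take $c\in C(p)$, $d\in D(q)$ with $p+q=m\ge1$. Since $C,D$ are coradically graded, $c\in C_p$ and $d\in D_q$, so $\De c\in\sum_{i_1+i_2=p}C_{i_1}\ot C_{i_2}$ and likewise for $\De d$. Expanding $\De(c\ot d)$, every summand $(c_{(1)}\ot d_{(1)})\ot(c_{(2)}\ot d_{(2)})$ with $c_{(1)}\in C_{i_1}$, $d_{(1)}\in D_{j_1}$, $c_{(2)}\in C_{i_2}$, $d_{(2)}\in D_{j_2}$ and $i_1+i_2=p$, $j_1+j_2=q$ falls into one of two mutually exhaustive cases: either $i_2=j_2=0$, whence the second tensorand lies in $C_0\ot D_0=(C\ot D)_0$; or else $i_1+j_1\le m-1$, whence the first tensorand lies in $C_{i_1}\ot D_{j_1}\subseteq F_{m-1}\subseteq(C\ot D)_{m-1}$ by the inductive hypothesis. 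Hence $\De(c\ot d)\in(C\ot D)_0\ot(C\ot D)+(C\ot D)\ot(C\ot D)_{m-1}$, so $c\ot d\in(C\ot D)_m$ by the wedge characterization of the coradical filtration, and linearity completes the step. The only delicate input is the base identification $(C\ot D)_0=C_0\ot D_0$, which genuinely uses the pointed hypothesis: without it, simple subcoalgebras of $C\ot D$ need not be tensor products of simples in $C$ and $D$, and both inclusions above could fail.
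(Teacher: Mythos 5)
Your reverse inclusion $F_m\subseteq(C\ot D)_m$ is essentially correct (up to a harmless transposition of which tensor factor carries $(C\ot D)_0$ and which carries $(C\ot D)_{m-1}$ in the final wedge), but it is the easy half. The genuine gap is in the forward inclusion $(C\ot D)_m\subseteq F_m$: the principle you invoke --- that the coradical filtration is \emph{minimal} among coalgebra filtrations with the prescribed zeroth term --- is false. The true general facts go the other way: for any coalgebra filtration $\{F_n\}$ of $E$ one has $\mathrm{corad}(E)\subseteq F_0$ and $F_n\subseteq F_0^{\wedge(n+1)}$, so when $F_0=E_0$ the coradical filtration is the \emph{maximal} such filtration, i.e.\ $F_n\subseteq E_n$ (this is exactly why your reverse inclusion works), and nothing forces $E_n\subseteq F_n$. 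Concretely, let $E=\bK g\op\bK w$ with $g$ group-like and $w$ a $(g,g)$-primitive, graded by $E(0)=\bK g$, $E(1)=0$, $E(2)=\bK w$. This is a graded (even pointed) coalgebra, $F_0=F_1=\bK g=E_0$ and $F_2=E$ is a coalgebra filtration with the prescribed zeroth term, yet $E_1=E\not\subseteq F_1$. So ``$E_0=F_0$ plus $\{F_n\}$ a coalgebra filtration'' cannot yield $(C\ot D)_1\subseteq F_1$; some input from the degree-one condition $C_1=C(0)\op C(1)$, $D_1=D(0)\op D(1)$ is indispensable, and your argument never uses it in this direction.

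The inclusion $(C\ot D)_1\subseteq(C\ot D)(0)\op(C\ot D)(1)$ is precisely where the content of the lemma sits, and it is what the paper's proof is devoted to: by the Taft--Wilson theorem (this is where pointedness is used a second time, beyond the identification of $(C\ot D)_0$), it suffices to treat a $(g\ot h,g'\ot h')$-primitive $b\in C(m)\ot D(n)$; writing $b=\sum_{i,j}c_i\ot d_j$ and expanding $\De_{C\ot D}(b)$, the cross-term $\sum_{i,j}(c_i\ot h)\ull{\ot}(g'\ot d_j)$ in bidegree $\bigl((m,0),(0,n)\bigr)$ forces, via the coradical gradedness of $C$ and $D$ in degree one, that $m=0,\,n\leq1$ or $m\leq1,\,n=0$. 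To repair your proof you would need either this skew-primitive computation or the Chin--Musson identity $(C\ot D)_n=\sum_{i+j=n}C_i\ot D_j$ for pointed coalgebras; the filtration-minimality shortcut does not exist.
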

\begin{proof}
First using \cite[Lemma 5.1.10]{Mon}, we see that $(C\ot D)_0=C_0\ot D_0=C(0)\ot D(0)=(C\ot D)(0)$. Now we need to prove that $(C\ot D)_1=(C\ot D)(0)\op(C\ot D)(1)$.  By the Taft-Wilson theorem~\cite[Theorem 5.4.1]{Mon}, it reduces to showing that any skew-primitive element $b\in C\ot D$ lies in $(C\ot D)(0)\op(C\ot D)(1)$. Suppose there exist $g,g'\in G(C),h,h'\in G(D)$ such that $\De_{C\ot D}(b)=(g\ot h)\ull{\ot}b+b\ull{\ot}(g'\ot h')$. Since the coproduct $\De_{C\ot D}$ is compatible with the grading, we can also assume that $b\in C(m)\ot D(n)$ for some $m,n\in\bN$, and write
\[b=\sum_{i,j}c_i\ot d_j,\]
where $\{c_i\}\subset C(m),\{d_j\}\subset D(n)$ both are linearly independent and
\[\De_C(c_i)=g\ot c_i+c_i\ot g'+x_i, \quad \De_D(d_j)=h\ot d_j+d_j\ot h'+y_j\]
for some $x_i\in\sum\limits_{k=1}^{m-1} C(k)\ot C(m-k)$ and $y_j\in\sum\limits_{k=1}^{n-1} D(k)\ot D(n-k)$. Now expanding $\De_{C\ot D}(b)$, we find a nonzero term
\[\sum_{i,j}(c_i\ot h)\ull{\ot}(g'\ot d_j)\]
in the homogenous component $(C(m)\ot D(0))\ull{\ot}(C(0)\ot D(n))$. Since $b$ is $(g\ot h,g'\ot h')$-primitive and $C,D$ both are coradically graded, it forces $m=0,n\leq1$ or $m\leq1,n=0$.
\end{proof}

\begin{prop}[{\cite{Ni} Milnor-Moore type property}]\label{MM} Let $B=\bigoplus_{i\in\bN}B(i)$ be a graded Hopf algebra. $B(1)$ has the natural $B(0)$-Hopf bimodule structure. Now denote $\mb{Prim}(B)=\De^{-1}(B(0)\ot B+ B\ot B(0))$, then $B$ is isomorphic to the quantum symmetric algebra $S_{B(0)}(B(1))$ if and only if
\[\begin{array}{l}
\text{(\mb i)}\;\quad\mb{Prim}(B)=B(0)\op B(1),\\
\text{(\mb ii)}\quad(\bigoplus_{i\geq1}B(i))^2=\bigoplus_{i\geq2}B(i).
\end{array}\]
Moreover, if the coradical $B_0=B(0)$, then $B$ is coradically graded as $\mb{Prim}(B)=B_1$.
\end{prop}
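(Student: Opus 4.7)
The plan is to prove the biconditional separately and then the ``moreover'' clause.

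For the forward direction, assume $B\cong S_{B(0)}(B(1))$. Condition (ii) is immediate from the definition of $S_H(M)$ as the Hopf subalgebra of $T_H^c(M)$ generated by $H\cup M$. For (i), any element $x\in M^{\Box_{B(0)}^n}$ of degree $n\geq 2$ has $\De_c(x)$ containing the middle summand from the $i=1$ term of Definition 2.1, which lies in $B(1)\ull{\ot}B(n-1)$ and is identified with $x$ itself under the canonical inclusion $M^{\Box_{B(0)}^n}\hookrightarrow M\ot M^{\Box_{B(0)}^{n-1}}$; hence $x\in\mb{Prim}(B)$ forces $x=0$, so $\mb{Prim}(B)\subseteq B(0)\op B(1)$, the reverse containment being clear.

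For the reverse direction, assuming (i) and (ii), I invoke the universal property Proposition \ref{uni} with $g=\pi:B\rw B(0)$ and $f=p:B\rw B(1)$ the canonical projections, producing a coalgebra map $F:B\rw T_{B(0)}^c(B(1))$ via formula \eqref{mu}. That $g$ is a coalgebra map and $f$ a $B(0)$-bicomodule map is built into the graded Hopf algebra structure; for $f(\mb{corad}(B))=0$ it suffices to check $\mb{corad}(B)\subseteq B(0)$. By (i), every grouplike $x=\sum_i x_i$ lies in $B(0)\op B(1)$, and matching the degree-$(1,1)$ part of $\De(x)=x\ot x$ forces $x_1=0$, so grouplikes sit inside $B(0)$, and a parallel degree-matching argument handles the simple subcoalgebras generating $\mb{corad}(B)$.

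To see that $F$ is an isomorphism onto $S_{B(0)}(B(1))$, note first that \eqref{mu} gives $F|_{B(0)}=\mi_{B(0)}$, so $\ker F$ is a coideal meeting $\mb{corad}(B)\subseteq B(0)$ trivially, hence zero. For multiplicativity I compare the two coalgebra maps $F\circ m_B$ and $m_{S_{B(0)}(B(1))}\circ(F\ot F):B\ot B\rw T_{B(0)}^c(B(1))$: they agree on $(B(0)\op B(1))^{\ot 2}$ by the explicit Hopf-bimodule description of the $S_H(M)$-product in low degrees (the $\al=0$ case of Definition \ref{qsa}), and condition (i) propagates this agreement to higher degrees, since any discrepancy would produce nonzero primitives of degree $\geq 2$. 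Combined with (ii), which says $B$ is algebra-generated by $B(0)\cup B(1)$, this yields $F(B)=S_{B(0)}(B(1))$ and completes the isomorphism.

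For the ``moreover'' clause, assume additionally $B_0=B(0)$. Then the Taft-Wilson theorem combined with (i) gives $B_1=B_0+\mb{Prim}(B)=B(0)\op B(1)$, and induction on the coradical filtration using $\De(B_n)\subseteq\sum_{i+j=n}B_i\ot B_j$ together with (i), (ii) yields $B_n=\bigoplus_{i=0}^n B(i)$, so $B$ is coradically graded with $\mb{Prim}(B)=B_1$. The main obstacle I anticipate is the multiplicativity step above: hypothesis (i) must not just identify the primitives of $B$ but also force the brace component $\al=f_{11}$ of Definition \ref{qsa} to vanish on the target side, since any nonzero $\al$ would generate primitives of degree $\geq 2$ in the image and obstruct the identification with the $\al=0$ quantum symmetric algebra rather than a genuine quantum quasi-symmetric one.
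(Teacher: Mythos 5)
The paper states this proposition without proof (it is quoted from \cite{Ni}), so there is no in-paper argument to compare against; I am judging your proposal on its own merits. Your forward direction is correct, and your overall strategy for the converse --- build $F:B\rw T_{B(0)}^c(B(1))$ from Proposition \ref{uni} with $g=\pi$, $f=p$, then show $F$ is an injective algebra map onto the subalgebra generated in degrees $0$ and $1$ --- is the right one. However, two steps are not actually justified as written. The injectivity argument is invalid as stated: a coideal meeting the coradical trivially need not be zero (in $\bK[x]$ with $x$ primitive, $\bK x$ is a nonzero coideal, indeed the kernel of a coalgebra surjection, meeting $C_0=\bK$ trivially). The correct tool is the Heyneman--Radford criterion~\cite[Theorem 5.3.1]{Mon}, which requires injectivity of $F$ on $B_1$, not merely on $B_0$; this is precisely where hypothesis (i) must enter, via $B_0\subseteq B(0)$ (automatic for any $\bN$-graded coalgebra by the coalgebra-filtration lemma, with no need for Taft--Wilson or pointedness), whence $B_1=\De^{-1}(B_0\ot B+B\ot B_0)\subseteq\mb{Prim}(B)=B(0)\op B(1)$, on which $F$ restricts to the identity.

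The multiplicativity step is likewise a heuristic rather than a proof: the difference of the two coalgebra maps $F\circ m_B$ and $*\circ(F\ot F)$ is not itself a coalgebra map, so ``any discrepancy would produce nonzero primitives of degree $\geq2$'' does not parse as an argument, and in fact (i) is not what is needed here. The clean route is the uniqueness clause of Proposition \ref{uni} applied to $D=B\ot B$: both composites are coalgebra maps (since $B$ and the quantum symmetric algebra are bialgebras), and a direct low-degree computation shows they have equal compositions with $\pi$ and with $p$, the latter vanishing on $\mb{corad}(B\ot B)\subseteq B(0)\ot B(0)$; uniqueness then forces them to coincide, and (ii) yields $F(B)=S_{B(0)}(B(1))$ exactly as you say. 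With these two repairs your outline becomes a complete proof. For the ``moreover'' clause, note that the paper's definition of coradically graded only requires $C_0=C(0)$ and $C_1=C(0)\op C(1)$, and the latter is immediate from $B_1=\De^{-1}(B_0\ot B+B\ot B_0)=\mb{Prim}(B)=B(0)\op B(1)$ once $B_0=B(0)$ is assumed; no Taft--Wilson and no induction on the filtration are needed.
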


\noindent{3.3.}
Now let $\{(U_{\q})_m\}_{m\in\bN}$ be the coradical filtration of $U_{\q}$.
Here we figure out $(U_{\q})_1$, which actually generalizes the result about the one-parameter version $U_q(\mg)$, $\mg$ semisimple,
given by Chin and Musson~\cite[Theorem A]{CM}.
\begin{prop}\label{cor}
For the generic case, $(U_{\q})_1=U_{\q}^0+\sum_{i\in I}e_iU_{\q}^0+f_iU_{\q}^0$.
\end{prop}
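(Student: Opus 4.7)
The plan is to identify the coradical filtration of $U_\q$ by transporting the (more transparent) filtration on a tensor product of Borel subalgebras along a coalgebra isomorphism.

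\textbf{Step 1.} First I would verify $(U_\q)_0 = U_\q^0$. In the generic case, $U_\q^0$ is the group algebra of the free abelian group generated by $\om_i^{\pm 1}$ and $\om_i'^{\pm 1}$, hence cosemisimple and contained in the coradical; the reverse inclusion follows from a standard argument using the $Q$-grading together with the nondegenerate pairing of Proposition~\ref{non} to rule out group-likes outside $U_\q^0$.

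\textbf{Step 2.} I would show that the two Borels $U_\q^{\geq 0} = U_\q^{+0} U_\q^+$ and $U_\q^{\leq 0} = U_\q^{-0} U_\q^-$, graded by $e$-degree and $f$-degree respectively, are coradically graded. Apply Proposition~\ref{MM}: condition (ii) is immediate from generation by degree-$1$ elements; condition (i), that $\mb{Prim}(B) = B(0) \oplus B(1)$, is the crucial input and rests on the generic hypothesis — in that case $U_\q^+$ is the Nichols algebra on $V = \bigoplus_i \bK e_i$ with the braiding prescribed by $\q$, whose braided primitives are concentrated in degree $1$, and bosonization yields the required statement for $U_\q^{\geq 0}$; the case of $U_\q^{\leq 0}$ is symmetric. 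The degree-$1$ piece is then $U_\q^{+0} + \sum_i e_i U_\q^{+0}$ (resp.\ $U_\q^{-0} + \sum_i f_i U_\q^{-0}$).

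\textbf{Step 3.} By Lemma~\ref{cog}, the tensor product coalgebra $U_\q^{\leq 0} \ot U_\q^{\geq 0}$ is coradically graded, with degree-$\leq 1$ subspace
\[
U_\q^{-0} \ot U_\q^{+0} \,+\, \Bigl(\sum_{i} f_i U_\q^{-0}\Bigr) \ot U_\q^{+0} \,+\, U_\q^{-0} \ot \Bigl(\sum_{i} e_i U_\q^{+0}\Bigr).
\]

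\textbf{Step 4.} The multiplication $m : U_\q^{\leq 0} \ot U_\q^{\geq 0} \to U_\q$ is a coalgebra map, since both factors are sub-bialgebras of $U_\q$. In the generic case it is moreover bijective: surjectivity by the triangular decomposition $U_\q = U_\q^- U_\q^0 U_\q^+$ together with $U_\q^0 = U_\q^{-0} U_\q^{+0}$, and injectivity by comparing PBW dimensions in each $Q$-homogeneous component. A coalgebra isomorphism preserves coradical filtrations, so applying $m$ to the subspace of Step~3 and absorbing the $U_\q^{\pm 0}$ factors into $U_\q^0$ via the scalar commutation $\om_i' e_j = q_{ji}^{-1} e_j \om_i'$ yields
\[
(U_\q)_1 = U_\q^0 + \sum_{i\in I} e_i U_\q^0 + \sum_{i\in I} f_i U_\q^0.
\]

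The main obstacle lies in Step~2: verifying that no element of $e$-degree $\geq 2$ in $U_\q^{\geq 0}$ can be primitive in the sense of Proposition~\ref{MM}. This is where the generic hypothesis is genuinely needed (through the Nichols-algebra description of $U_\q^+$), and it is what adapts the one-parameter input of \cite{CM} to the multi-parameter setting. A secondary but essential ingredient is the algebra decomposition $U_\q^0 \cong U_\q^{-0} \ot U_\q^{+0}$, without which the injectivity of $m$ would fail.
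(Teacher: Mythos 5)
Your proposal is correct and follows essentially the same route as the paper: grade the two Borel parts, verify they are coradically graded via Proposition~\ref{MM}, transport the coradical filtration through the coalgebra isomorphism $U_\q\cong U_\q^{\leq0}\ot U_\q^{\geq0}$ using Lemma~\ref{cog}, and read off degrees $0$ and $1$. The only (inessential) differences are that the paper verifies condition (i) of Proposition~\ref{MM} directly by showing a skew primitive of height $\geq 2$ lies in the radical of the pairing and hence vanishes by Proposition~\ref{non} --- which is precisely the content of your appeal to the Nichols-algebra description of $U_\q^+$ --- and that it obtains the coalgebra isomorphism from the Drinfeld double realization of $U_\q$ rather than from your multiplication map plus triangular decomposition.
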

\begin{proof}
First, we deal with $U_\q^{\geq0}$ and $U_\q^{\leq0}$. For any $\al\in Q^+$, one can define the \textit{height}
of $\al$ as $\mb{ht}(\al)=\sum_{i\in I}m_i$ if $\al=\sum_{i\in I}m_i\al_i$.
Note that the Borel positive part $U_\q^{\geq0}$ becomes a graded Hopf algebra with respect to the height, i.e.,
\[U_\q^{\geq0}=\bigoplus_{k\in\bN}U_\q^{\geq0}(k),\]
where $U_\q^{\geq0}(k)=\bigoplus\limits_{\al\in Q^+,\bs{ht}(\al)=k}(U_\q^{\geq0})_\al,~\forall k\in\bN$.
Next, we show that $U_\q^{\geq0}$ is isomorphic to the quantum symmetric algebra $S_{H^+}(M^+)$,
where $H^+=(U_\q^{\geq0})_0=U_\q^{\geq0}(0)=U_\q^{+0},~M^+=U_\q^{\geq0}(1)=\bigoplus_{i\in I}(U_\q^{\geq0})_{\al_i}=\bigoplus_{i\in I}e_iU_\q^{+0}$.
It suffices to check that $U_\q^{\geq0}$ satisfies the conditions (i) \& (ii) in Proposition \ref{MM}.
Apparently, the ideal $\bigoplus_{k\geq1}(U_\q^{\geq0})(k)$ is generated by $(U_\q^{\geq0})(1)$,
thus (ii) is satisfied. For condition (i), we consider the right coinvariant subspace of $U_\q^{\geq0}$ with respect to $U_\q^{+0}$, that is $U_\q^+$.
Note that any skew primitive element in $U_\q^+$ with height not smaller than $2$ lies in the radical of the skew pairing $\lan\,,\,\ran_{\q}$.
Due to Proposition \ref{non}, we know that it is zero. Now $U_\q^{\geq0}$ is the bosonization of $U_\q^+$, denoted by $U_\q^{+0}$,
thus condition (i) for $U_\q^{\geq0}$ also holds. Similarly,
\[U_\q^{\leq0}=\bigoplus_{k\in\bN}U_\q^{\leq0}(k),\]
where $U_\q^{\geq0}(k)=\bigoplus\limits_{\al\in Q^-,\bs{ht}(-\al)=k}(U_\q^{\leq0})_\al,~\forall k\in\bN$. $U_\q^{\leq0}$
is isomorphic to the quantum symmetric algebra $S_{H^-}(M^-)$, where $H^-=(U_\q^{\leq0})_0=U_\q^{\leq0}(0)=U_\q^{-0},~M^-=U_\q^{\leq0}(1)=\bigoplus_{i\in I}(U_\q^{\leq0})_{-\al_i}=\bigoplus_{i\in I}f_iU_\q^{-0}$.
Moreover, it is clear that $U_\q^{\leq0},U_\q^{\geq0}$ both are coradically graded.

Now we return to describe $(U_\q)_1$. Note that $U_{\q}$ is naturally isomorphic to $U^{\leq0}_{\q}\ot U^{\geq0}_{\q}$ as graded coalgebras, since $U_{\q}$ can be constructed as the Drinfeld double $\mathcal{D}(U_{\q}^{\geq0},U_{\q}^{\leq0},\lan,\ran_{\q})$~\cite[Coro. 21]{PHR}. More explicitly,
\begin{equation}\label{gra}
U_\q=\bigoplus_{k\in\bN}U_\q(k),
\end{equation}
where $U_\q(k)=\bigoplus(U_\q^{\leq0})_\al(U_\q^{\geq0})_\be$ with the sum over all $\al\in Q^-,\be\in Q^+,~\mb{ht}(-\al)+\mb{ht}(\be)=k$ for any $k\in\bN$. Hence, $U_\q$ is coradically graded due to Lemma \ref{cog}. Meanwhile, it is clear that $(U_{\q})_0=U_{\q}^0=U_\q(0)$ and $U_\q(1)=\sum_{i\in I}e_iU_{\q}^0+f_iU_{\q}^0$, thus we see that $(U_{\q})_1=U_{\q}^0+\sum_{i\in I}e_iU_{\q}^0+f_iU_{\q}^0$.
\end{proof}

By Proposition \ref{cor}, we can also obtain the following useful statement, analogous to that given in \cite[Theorem C]{CM}.
\begin{prop}
For the generic case, if $L$ is a Hopf ideal of $U_{\q}$ such that $L\cap U_{\q}^0=0$, then $L=0$.
\end{prop}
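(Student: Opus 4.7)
The strategy is to invoke the Heyneman--Radford injectivity criterion~\cite[Theorem~5.3.1]{Mon} for the quotient Hopf algebra map $\pi:U_\q\rw U_\q/L$, whose kernel is $L$. That criterion asserts $\pi$ is injective (equivalently $L=0$) if and only if $\pi|_{(U_\q)_1}$ is injective, so the entire problem reduces to proving $L\cap(U_\q)_1=0$.

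By Proposition~\ref{cor}, $(U_\q)_1=U_\q^0+\sum_{i\in I}(e_iU_\q^0+f_iU_\q^0)$, and the three families of summands lie in the pairwise distinct weight spaces (of weights $0,\al_i,-\al_i$) of the $Q$-grading on $U_\q$ coming from the adjoint action of the torus $T=\lan\om_i,\om_i'\mid i\in I\ran$. For generic $\q$ these characters of $T$ are pairwise distinct, and $L$, being a two-sided ideal, is stable under $\mb{Ad}(T)$; so $L$ decomposes as a sum of its weight components and
\[L\cap(U_\q)_1=(L\cap U_\q^0)\op\bigoplus_{i\in I}\bigl((L\cap e_iU_\q^0)\op(L\cap f_iU_\q^0)\bigr).\]
The first summand vanishes by hypothesis, leaving the task of killing $L\cap e_iU_\q^0$, and symmetrically $L\cap f_iU_\q^0$.

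To handle $L\cap e_iU_\q^0$, I take $x=e_ia\in L$ and expand $a=\sum_\lambda c_\lambda g_\lambda$ in the basis of distinct group-likes of the commutative group algebra $U_\q^0$. Since $L$ is a coideal, applying $(\mi\ot\pi)\De$ sends $x$ into $L\ot(U_\q/L)$. Using $\De(e_i)=e_i\ot 1+\om_i\ot e_i$ and separating the result by weight in the right tensor factor, the weight-$0$ piece reads $\sum_\lambda c_\lambda(e_ig_\lambda)\ot\pi(g_\lambda)\in L\ot\pi(U_\q^0)$. Since $L\cap U_\q^0=0$ means that $\pi$ is injective on $U_\q^0$, the images $\pi(g_\lambda)$ remain linearly independent in $\pi(U_\q^0)$, forcing $c_\lambda e_ig_\lambda\in L$ for every $\lambda$. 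If $c_\lambda\neq 0$ for some $\lambda$, right-multiplication by the invertible element $g_\lambda^{-1}\in U_\q$ gives $e_i\in L$; then the Chevalley-type relation $(R5)$ produces $\om_i-\om_i'\in L\cap U_\q^0=0$, contradicting $\om_i\neq\om_i'$ in $U_\q^0$. Hence all $c_\lambda=0$, so $a=0$.

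The main obstacle is this last paragraph: propagating $L\cap U_\q^0=0$ to $L\cap e_iU_\q^0=0$ requires the simultaneous use of the coideal property of $L$, the $Q$-grading, and the $(e_i,f_i)$-commutator $(R5)$ to isolate single group-likes. Once $L\cap(U_\q)_1=0$ is in hand, the Heyneman--Radford theorem closes the argument immediately.
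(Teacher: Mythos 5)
Your proof is correct and follows essentially the same route as the paper's: Heyneman--Radford reduces everything to showing $L\cap(U_\q)_1=0$, the description of $(U_\q)_1$ from Proposition~\ref{cor} together with the $Q$-homogeneity of $L$ isolates $L\cap e_iU_\q^0$, expanding the coproduct of $\sum_\la c_\la e_ig_\la$ against the linearly independent images of the group-likes forces $e_i\in L$, and relation (R5) then yields $\om_i-\om_i'\in L\cap U_\q^0$, a contradiction. The only difference is cosmetic: you justify the homogeneity of $L$ explicitly via the adjoint torus action (which the paper merely asserts) and work with $(\mi\ot\pi)\De$ and weight projections rather than computing $(\pi\ot\pi)\De$ directly in the quotient.
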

\begin{proof}
Let
$\pi$ be the projection map from $U_{\q}$ to $\ol{U_{\q}}$, where $\ol{U_{\q}}=U_{\q}/L$.
First note that the restriction $\pi|_{U_{\q}^0}$ is injective by the assumption of $L$. If we assume that $L\neq0$, then $\pi|_{(U_{\q})_1}$ can not be injective, i.e., $L\cap(U_{\q})_1\neq0$, by a result of Heyneman-Radford on injectivity of a coalgebra map (\cite[Theorem 5.3.1]{Mon}). On the other hand, the coideal $L$ is homogeneous with respect to the grading of $U_\q$ in (\ref{gra}), thus the images $\ol{e_i}\ol{U^0_{\q}}\cap\ol{U^0_{\q}}=0~(i\in I)$ in $\ol{U_{\q}}$. Now $(U_{\q})_1=U_{\q}^0+\sum_{i\in I}e_iU_{\q}^0+f_iU_{\q}^0$ by Proposition \ref{cor}, thus we can assume $L\cap e_iU_{\q}^0\neq0$ for some $i\in I$ without loss of generality.

Take $\sum_j a_j e_ig_j\in L$, where the $g_j\in U_{\q}^0$ are distinct group-like elements and the coefficients $a_j$ are nonzero, then
\[0=\sum_j a_j\De(\ol{e_ig_j})=\sum_j a_j \lb\ol{e_ig_j}\ot\ol{g_j}+\ol{\om_ig_j}\ot\ol{e_ig_j}\rb.\]
Since $\ol{e_i}\ol{U^0_{\q}}\cap\ol{U^0_{\q}}=0$ and $\{\ol{g_j}\}$ is linearly independent in $\ol{U^0_{\q}}$, we conclude that $\ol{e_ig_j}=0$, i.e. $e_ig_j\in L$ for all $j$. In particular, $e_i\in L$, thus $\om_i-\om_i'=\frac{q_{ii}-1}{q_{ii}}[\,e_i, f_i\,]\in L$, which contradicts to the assumption $L\cap U_{\q}^0=0$.
\end{proof}

\section{QQSA realization of multi-parameter quantum groups}

\noindent{4.1.}
Inspired by the work of Fang-Rosso \cite{FR}, we will describe axiomatically the entire multi-parameter quantum groups of symmetrizable Kac-Moody algebras $U_{\q}$ as quantum
quasi-symmetric algebras as follows.

Let $H=\bK[K_i^{\pm1},{K'_i}^{\pm1}:i\in I]$ be the group algebra of the free abelian group $\bZ^{2|I|}$ with the canonical Hopf algebra structure.
Take $W$ to be a $\bK$-vector space spanned by a basis $\{E_i,F_i,\xi_i:i\in I\}$,  $M=W\ot H$.
Consider the $H$-Hopf bimodule structure on $M$ as follows.

(1) $M$ has the trivial right $H$-Hopf module structure, i.e.
\[(x\ot K)\cdot K'=x\ot KK',~\de_R(x\ot K)=(x\ot K_{(1)})\ot K_{(2)},\quad x\in W,~K,K'\in H.\]

(2) The left $H$-Hopf module structure of $M$ is defined as follows:
First endow $W$ with the following left $H$-module and comodule structure,
\[\begin{array}{l}
K_i\cdot E_j=q_{ij}E_j,~K_i\cdot F_j=q_{ij}^{-1}F_j,~K'_i\cdot F_j=q_{ji}F_j,~K'_i\cdot E_j=q_{ji}^{-1}E_j,\\
K_i\cdot\xi_j=\xi_j,~K'_i\cdot\xi_j=\xi_j;\\
\de_L(E_i)=K_i\ot E_i,~\de_L(F_i)={K'_i}^{-1}\ot F_i,~\de_L(\xi_i)=K_i{K'_i}^{-1}\ot\xi_i.
\end{array}
\]
Meanwhile, $H$ derives the left $H$-module and comodule structure from the left multiplication and its comultiplication,
then so does $M=W\ot H$ as the tensor product. Moreover, one can easily check that it makes $M$ a left $H$-Hopf module.

Now we write $x\ot K\in M$ as $xK$ for short. Define the map
\begin{equation}\label{map}
\al=\al_M:M\ot M\rw M
\end{equation}
as follows: For any $K,K'\in H$, if $\la\in \bK$ is the constant such that $K\cdot F_j=\la F_j$, then
\[\al(E_iK\ot F_jK')=\de_{ij}\dfrac{q_{ii}\la\xi_iKK'}{q_{ii}-1}, \quad i, j\in I,\]
and for any other elements not of the above form, $\al$ gives 0. Here we only need to check that $\al$ satisfies the condition (3) given in
Definition \ref{qsa} on those elements $E_iK\ot F_jK'$:

(1) $\al$ can factor through $M\ot_H M$. Without loss of generality, we assume that $K$ is group-like, then
\[\al(E_i\ot(K\cdot F_j)KK')=\la\al(E_i\ot F_jKK')=\de_{ij}\dfrac{q_{ii}\la \xi_iKK'}{q_{ii}-1}=\al(E_iK\ot F_jK').\]

(2) $\al$ is a $H$-bimodule map. First, it is obvious a right $H$-module map. For the left module case, take any $K_p,K'_p,~p\in I$, then
\[\begin{array}{l}
\begin{split}
\al(K_p\cdot(E_iK\ot F_jK'))&=\al((K_p\cdot E_i)K_pK\ot F_jK')=\de_{ij}\dfrac{q_{pi}q_{ii}q_{pj}^{-1}\la\xi_iK_pKK'}{q_{ii}-1}\\
&=\de_{ij}\dfrac{q_{ii}\la\xi_iK_pKK'}{q_{ii}-1}=K_p\cdot\al(E_iK\ot F_jK'),
\end{split}\\
\begin{split}
\al(K'_p\cdot(E_iK\ot F_jK'))&=\al((K'_p\cdot E_i)K'_pK\ot F_jK')=\de_{ij}\dfrac{q_{ip}^{-1}q_{ii}q_{jp}\la\xi_iK'_pKK'}{q_{ii}-1}\\
&=\de_{ij}\dfrac{q_{ii}\la\xi_iK'_pKK'}{q_{ii}-1}=K'_p\cdot\al(E_iK\ot F_jK').
\end{split}
\end{array}\]

(3) $\al$ is a $H$-cobimodule map. We can still assume that $K$ is group-like, and $K\cdot F_j=\la F_j$. For the left comodule structure,
\[\begin{split}
(\mi\ot\al)&\de_L(E_iK\ot F_jK')=K_i{K'_j}^{-1}KK'_{(1)}\ot\dfrac{\de_{ij}q_{ii}\la\xi_iKK'_{(2)}}{q_{ii}-1}\\
&=K_i{K'_i}^{-1}KK'_{(1)}\ot\dfrac{\de_{ij}q_{ii}\la\xi_iKK'_{(2)}}{q_{ii}-1}=\de_L\al(E_iK\ot F_jK').
\end{split}\]
For the right comodule structure,
\[(\al\ot\mi)\de_R(E_iK\ot F_jK')=\dfrac{\de_{ij}q_{ii}\la\xi_iKK'_{(2)}}{q_{ii}-1}\ot KK'_{(2)}=\de_R\al(E_iK\ot F_jK').\]

(4) The associativity of $\al$ can be seen immediately from definition.

According to Definition \ref{qsa}, if take $f_{11}=\al,~f_{01}=a_L,~f_{10}=a_R$, then it defines a Hopf algebra $Q_H(M)$,
where the multiplication is denoted by $*$. Consider the ideal $J$ of $Q_H(M)$ generated by $\{\xi_i-K_i{K'_i}^{-1}+1:i\in I\}$,
then $J$ is a Hopf ideal. Indeed,
\[\De(\xi_i-K_i{K'_i}^{-1}+1)=(\xi_i-K_i{K'_i}^{-1}+1)\ot1+K_i{K'_i}^{-1}\ot(\xi_i-K_i{K'_i}^{-1}+1).\]
Define the Hopf quotient $U_H(M)=Q_H(M)/J$, and we will prove that it is isomorphic to the multi-parameter quantum group $U_{\q}(\mg_A)$ as Hopf algebras.
\begin{theorem}\label{iso}
For the generic case, there exists a Hopf algebra isomorphism $\psi:U_{\q}(\mg_A)\rw U_H(M)$ such that for any $i\in I$,
\[\psi(\om_i)=K_i,~\psi(\om'_i)=K'_i,~\psi(e_i)=E_i,~\psi(f_i)=F_i*K'_i=F_iK'_i.\]
\end{theorem}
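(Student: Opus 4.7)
The plan is to show $\psi$ is a well-defined Hopf algebra homomorphism, then establish bijectivity.

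First I would verify that the proposed assignment preserves the defining relations (R1)--(R7) in $U_H(M)$. Relations (R1), (R2) hold automatically in the commutative group algebra $H$. For (R3), (R4), note that formula (\ref{mu}) yields $K*x = K\cdot x$ and $x*K = x\cdot K$ for $K\in H, x\in M$, since $f_{00} = 0$ kills all higher corrections; thus $K_iE_jK_i^{-1} = q_{ij}E_j$ and the remaining identities follow. For the key relation (R5), expand $E_i*(F_jK_j')$ and $(F_jK_j')*E_i$ via (\ref{mu}), using $\De(E_i) = K_i\ot E_i + E_i\ot 1$ and $\De(F_jK_j') = 1\ot F_jK_j' + F_jK_j'\ot K_j'$. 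The only nontrivial degree-$1$ contribution is $\al(E_i \ot F_jK_j') = \de_{ij}\frac{q_{ii}}{q_{ii}-1}\xi_iK_j'$ (the companion $\al(F_jK_j'\ot E_i)$ vanishes), while the two degree-$2$ outputs both equal $E_i\ot F_jK_j' + q_{ij}^{-1}F_jK_iK_j'\ot E_iK_j'$ after using $K_iK_j' = K_j'K_i$, and so cancel in the commutator. Passing to $U_H(M) = Q_H(M)/J$ converts $\xi_iK_i'$ into $K_i - K_i'$, giving (R5) on the nose.

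For the quantum Serre relations (R6), (R7), I would use the vanishing of $\al$ on all pairs $E_iK\ot E_jK'$ and $F_iK\ot F_jK'$: the subalgebra of $Q_H(M)$ generated by $\{E_i\}$ together with $H$ is then the bosonization of a quantum shuffle algebra of diagonal type (\cite{Ro1}), in which the quantum Serre elements vanish by a standard computation; (R7) is symmetric. Coproduct, counit and antipode compatibility are direct on generators from $\de_L,\de_R$ of $E_i$ and $F_iK_i'$. Surjectivity is then immediate, since $K_i^{\pm1}, K_i'^{\pm1}, E_i$ and $F_i = \psi(f_i)K_i'^{-1}$ all lie in $\mathrm{Im}\,\psi$ and generate $U_H(M)$.

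The principal obstacle is injectivity. Since $\ker\psi$ is a Hopf ideal of $U_\q$, the proposition just proved reduces the problem to showing $\psi|_{U_\q^0}$ is injective. For this I would construct an explicit Hopf-algebra retraction $\phi : U_H(M) \to U_\q$ with $\phi\circ\psi = \mi_{U_\q}$ on generators, so that $\psi|_{U_\q^0}$ is automatically injective. At the level of the coradical $H$ and the degree-one piece $M$, set the obvious assignments $K_i,K_i'\mapsto \om_i,\om_i'$ and $E_i\mapsto e_i,\; F_i\mapsto f_i\om_i'^{-1},\; \xi_i\mapsto \om_i\om_i'^{-1}-1$, extended $H$-bilinearly; then extend to $Q_H(M)$ by applying the multi-brace multiplication formula (\ref{mu}) inside the target $U_\q$. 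The technical heart is verifying that this extension is well-defined as an algebra map, which reduces (via (\ref{mu}) and the definition of $Q_H(M)$ as the subalgebra generated by $H$ and $M$) to compatibility with $f_{11} = \al$ on $E_i\ot F_j$-type pairs; this is precisely the content of (R5) in $U_\q$, now at our disposal. Since each generator $\xi_i - K_iK_i'^{-1}+1$ of $J$ is sent to zero, $\phi$ descends to $U_H(M)$, and then $\phi\circ\psi = \mi$ on generators forces $\psi$ to be injective, completing the proof.
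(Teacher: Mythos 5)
Your verification of (R1)--(R5) and of surjectivity matches the paper's computation in substance, and replacing the paper's explicit evaluation of $\mb{ad}_l(E_i)^{1-a_{ij}}(E_j)$ and $\mb{ad}_r(F_iK'_i)^{1-a_{ij}}(F_jK'_j)$ inside the cotensor coalgebra by an appeal to the vanishing of quantum Serre elements in Nichols algebras of diagonal type is a defensible shortcut (the paper carries out the computation directly, which also produces the explicit tensor form of these elements). The genuine problem is your injectivity argument. You propose to build a retraction $\phi: U_H(M)\to U_{\q}$ by assigning values on $H$ and $M$ and then ``extending to $Q_H(M)$ by applying the multiplication formula (\ref{mu}) inside the target.'' But $Q_H(M)$ is defined as a subalgebra of the cotensor coalgebra $T_H^c(M)$, and no presentation of it by generators and relations is available at this stage; the universal property of Proposition \ref{uni} produces coalgebra maps \emph{into} $T_H^c(M)$, not algebra maps \emph{out of} it. A given tensor in $M^{\Box_H^n}\cap Q_H(M)$ can be expressed as a $*$-product of elements of $H$ and $M$ in many ways, and well-definedness of $\phi$ requires knowing that every linear relation among such products is a consequence of the relations you have checked --- which is essentially the statement of the theorem itself, so the argument is circular. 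The assertion that well-definedness ``reduces to compatibility with $f_{11}=\al$, which is the content of (R5)'' is not substantiated.

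The paper takes a different route to injectivity, and this is the real point of Section 3: it shows $U_{\q}$ is coradically graded (Lemma \ref{cog} together with the Drinfel'd double decomposition), computes $(U_{\q})_1=U_{\q}^0+\sum_{i\in I}e_iU_{\q}^0+f_iU_{\q}^0$ (Proposition \ref{cor}), checks that $\psi$ restricted to $(U_{\q})_1$ is injective, and then invokes the Heyneman--Radford theorem for the coalgebra map $\psi$. If you prefer to keep your reduction via the Hopf-ideal proposition (a Hopf ideal meeting $U_{\q}^0$ trivially is zero), you still owe an independent argument that the group-like elements $K_\mu K'_\nu$ remain linearly independent in the quotient $Q_H(M)/J$, which your proposal does not supply; note that $J$ is not homogeneous for the tensor degree, so this is not automatic. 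Either way, some control of the coradical --- of $U_{\q}$ or of $U_H(M)$ --- is needed and is missing from your write-up.
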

\begin{proof}
Take a set $X=\{e_i, f_i,
\om_i^{\pm1}, \om_i'^{\pm1}:i\in I\}$. By the universal property of the tensor algebra $T(X)$ on $X$, one can define an algebra map $\Psi:T(X)\rw U_H(M)$,
\[\Psi(\om_i)=K_i,~\Psi(\om'_i)=K'_i,~\Psi(e_i)=E_i,~\Psi(f_i)=F_i*K'_i=F_iK'_i.\]
Now we endow $T(X)$ with the coproduct $\De$ and the antipode $S$ as in Proposition \ref{Hopf}. Since
\[\begin{array}{l}
\De(K_i)=K_i\ull{\ot}K_i,~\De(K_i)=K'_i\ull{\ot}K'_i,~\De(E_i)=E_i\ull{\ot}1+K_i\ull{\ot}E_i,\\
\De(F_i*K'_i)=(F_i\ull{\ot}1+{K'_i}^{-1}\ull{\ot}F_i)*(K'_i\ull{\ot}K'_i)=
F_iK'_i\ull{\ot}K'_i+1\ull{\ot}F_iK'_i,
\end{array}\]
we know that $\Psi$ becomes a Hopf algebra homomorphism.

Let $R$ be the ideal of $T(X)$ defined by the relations (R1) -- (R7), then $U_{\q}(\mg_A)=T(X)/R$.
Now we need to show that $\Psi$ maps the ideal $R$ to $0$, thus it induces the Hopf algebra homomorphism $\psi$.
For any $K, K', K''\in H, ~x, y\in W$, we list some useful equalities due to the formula (\ref{mu}) as follows.
\[\begin{array}{l}
K*K'=KK',~K*x=(K\cdot x)K,~x*K=xK,\\
\begin{split}
xK*yK'&=(xK_{(1)}\cdot y_{(-1)}K'_{(1)})\ot(K_{(2)}\cdot y_{(0)}K'_{(2)})\\
&+(x_{(-1)}K_{(1)}\cdot yK'_{(1)})\ot(x_{(0)}K_{(2)}\cdot K'_{(2)})+\al(xK\ot yK'),
\end{split}\\
K*(xK'\ot yK'')=(K_{(1)}\cdot xK')\ot(K_{(2)}\cdot yK''),\\
(xK\ot yK')*K''=xKK''_{(1)}\ot yK'k''_{(2)}.
\end{array}
\]
First it is clear that $\Psi$ annihilates (R1), (R2). For (R3), we have
\[\begin{array}{l}
\Psi(\om_ie_j-q_{ij}e_j\om_i)=K_i*E_j-q_{ij}E_j*K_i=(K_i\cdot E_j)K_i-q_{ij}E_jK_i=0,\\
\Psi(\om'_ie_j-q_{ji}^{-1}e_j\om'_i)=K'_i*E_j-q_{ji}^{-1}E_j*K'_i=(K'_i\cdot E_j)K'_i-q_{ji}^{-1}E_jK'_i=0.
\end{array}\]
For (R4),
\[\begin{array}{l}
\begin{split}
\Psi(\om_if_j-q_{ij}^{-1}f_j\om_i)&=K_i*F_j*K'_j-q_{ij}^{-1}F_j*K'_j*K_i\\
&=(K_i\cdot F_j)K_iK'_j-q_{ij}^{-1}F_jK'_jK_i=0,
\end{split}\\
\begin{split}
\Psi(\om'_if_j-q_{ji}f_j\om'_i)&=K'_i*F_j*K'_j-q_{ji}F_j*K'_j*K'_i\\
&=(K'_i\cdot F_j)K'_iK'_j-q_{ji}F_jK'_jK'_i=0.
\end{split}
\end{array}\]
For (R5),
\[\begin{split}
\Psi([e_i,f_j]&-\de_{ij}\frac{q_{ii}}{q_{ii}-1}(\om_i-\om_i'))=
E_i*F_j*K'_j-F_j*K'_j*E_i-\de_{ij}\frac{q_{ii}}{q_{ii}-1}(K_i-K_i')\\
&=(E_i\cdot{K'_j}^{-1}\ot F_j+K_i\cdot F_j\ot E_i+\al(E_i\ot F_j))*K'_j\\
&-(F_jK'_j\cdot K_i\ot K'_j\cdot E_i+{K'_j}^{-1}K'_j\cdot E_i\ot F_jK'_j+\al(F_jK'_j\ot E_i))\\
&-\de_{ij}\frac{q_{ii}}{q_{ii}-1}(K_i-K_i')\\
&=E_i\ot F_jK'_j+q_{ij}^{-1}F_jK_iK'_j\ot E_iK'_j+\de_{ij}\dfrac{q_{ii}\xi_iK'_j}{q_{ii}-1}\\
&-(q_{ij}^{-1}F_jK'_jK_i\ot E_iK'_j+E_i\ot F_jK'_j)-\de_{ij}\frac{q_{ii}}{q_{ii}-1}(K_i-K_i')=0.
\end{split}
\]
What's left is to check the quantum Serre relations (R6), (R7). If abbreviate the LHS of (R6) as $u^+_{ij}$, then $u^+_{ij}=\mb{ad}_l(e_i)^{1-a_{ij}}(e_j)$, where $\mb{ad}_l(x)(y)=x_{(1)}yS(x_{(2)})$. As $\Psi$ is a Hopf algebra homomorphism,
\[\begin{split}
\Psi(\mb{ad}_l(e_i)(e_j))&=\mb{ad}_l(E_i)(E_j)=E_i*E_j-K_i*E_j*K_i^{-1}*E_i\\
&=E_i\cdot K_j\ot E_j+K_i\cdot E_j\ot E_i-q_{ij}(E_j\cdot K_i\ot E_i+K_j\cdot E_i\ot E_j)\\
&=(1-q_{ij}q_{ji})E_iK_j\ot E_j.
\end{split}\]
Now consider $\mb{ad}_l(E_i)^s(E_j),~s\geq1$ in general. By the formula (\ref{mu}),
\[\begin{split}
E_i&*(E_iK_i^{s-1}K_j\ot\cdots\ot E_iK_j\ot E_j)=K_i\cdot E_iK_i^{s-1}K_j\ot\cdots\ot K_i\cdot E_iK_j\ot K_i\cdot E_j\ot E_i\\
&+\sum_{k=0}^sK_i\cdot E_iK_i^{s-1}K_j\ot\cdots\ot K_i\cdot E_iK_i^kK_j\ot E_i\cdot K_i^kK_j\ot E_iK_i^{k-1}K_j\ot\cdots \ot E_iK_j\ot E_j\\
&=q_{ii}^sq_{ij}E_iK_i^sK_j\ot\cdots\ot E_iK_iK_j\ot E_jK_i\ot E_i+(s+1)_{q_{ii}}E_iK_i^sK_j\ot\cdots\ot E_iK_j\ot E_j,
\end{split}\]
\[\begin{split}
K_i&*(E_iK_i^{s-1}K_j\ot\cdots\ot E_iK_j\ot E_j)*K_i^{-1}*E_i\\
&=q_{ii}^sq_{ij}(E_iK_i^{s-1}K_j\ot\cdots\ot E_iK_j\ot E_j)*E_i\\
&=q_{ii}^sq_{ij}\big(E_iK_i^{s-1}K_j\cdot K_i\ot\cdots\ot E_iK_j\cdot K_i\ot E_j\cdot K_i\ot E_i\\
&+\sum_{k=0}^sE_iK_i^{s-1}K_j\cdot K_i\ot\cdots\ot E_iK_i^kK_j\cdot K_i\ot K_i^kK_j\cdot E_i\ot E_iK_i^{k-1}K_j\ot\cdots\ot E_iK_j\ot E_j\big)\\
&=q_{ii}^sq_{ij}\lb E_iK_i^sK_j\ot\cdots\ot E_iK_iK_j\ot E_jK_i\ot E_i+(s+1)_{q_{ii}}q_{ji}E_iK_i^sK_j\ot\cdots\ot E_iK_j\ot E_j\rb.
\end{split}\]
Subtracting one of two equalities above from the other, we get $\mb{ad}_l(E_i)(E_iK_i^{s-1}K_j\ot\cdots\ot E_iK_j\ot E_j)=(s+1)_{q_{ii}}(1-q_{ii}^sq_{ij}q_{ji})E_iK_i^sK_j\ot\cdots\ot E_iK_j\ot E_j$. Hence,
\[\mb{ad}_l(E_i)^s(E_j)=(s)_{q_{ii}}!\prod_{k=1}^{s-1}(1-q_{ii}^kq_{ij}q_{ji})
E_iK_i^{s-1}K_j\ot\cdots\ot E_iK_j\ot E_j,~s\geq1.\]
Especially for $s=1-a_{ij}$, the constraint (\ref{car}) makes $\Psi(u^+_{ij})=\mb{ad}_l(E_i)^{1-a_{ij}}(E_j)=0$.

Similarly, we abbreviate the LHS of (R7) as ~$u^-_{ij}$, then $u^-_{ij}=\mb{ad}_r(f_i)^{1-a_{ij}}(f_j)$, where $\mb{ad}_r(x)(y)=S(x_{(1)})yx_{(2)}$. As $\Psi$ is a Hopf algebra homomorphism,
\[\begin{split}
\Psi(\mb{ad}_r(f_i)(f_j))&=\mb{ad}_r(F_iK'_i)(F_jK'_j)=F_jK'_j*F_iK'_i
-F_i*F_jK'_j*K'_i\\
&=F_jK'_j\cdot {K'_i}^{-1}K'_i\ot K'_j\cdot F_iK'_i+{K'_j}^{-1}K'_j\cdot F_iK'_i\ot F_jK'_j\cdot K'_i\\
&-(F_i\cdot {K'_j}^{-1}K'_jK'_i\ot F_jK'_jK'_i+{K'_i}^{-1}\cdot F_jK'_jK'_i\ot F_i\cdot K'_jK'_i)\\
&=(q_{ij}-q_{ji}^{-1})F_jK'_j\ot F_iK'_iK'_j.
\end{split}\]
In general for $\mb{ad}_r(F_iK'_i)^s(F_jK'_j),~s\geq1$, by the formula (\ref{mu}) again, we have
\[\begin{split}
(F_j&K'_j\ot F_iK'_iK'_j\cdots\ot F_i{K'_i}^sK'_j)*F_iK'_i\\
&=F_iK'_i\ot F_jK'_j\cdot K'_i\ot F_iK'_iK'_j\cdot K'_i\ot\cdots\ot F_i{K'_i}^sK'_j\cdot K'_i\\
&+\sum_{k=0}^sF_jK'_j\ot F_iK'_iK'_j\ot\cdots\ot F_i{K'_i}^kK'_j\ot {K'_i}^kK'_j\cdot F_iK'_i\\
&\ot F_i{K'_i}^{k+1}K'_j\cdot K'_i\ot\cdots\ot F_i{K'_i}^sK'_j\cdot K'_i\\
&=F_iK'_i\ot F_jK'_iK'_j\ot F_i{K'_i}^2K'_j\ot\cdots\ot F_i{K'_i}^{s+1}K'_j\\
&+(s+1)_{q_{ii}}q_{ij}F_jK'_j\ot F_iK'_iK'_j\cdots\ot F_i{K'_i}^{s+1}K'_j,
\end{split}\]
\[\begin{split}
F_i&*(F_jK'_j\ot F_iK'_iK'_j\cdots\ot F_i{K'_i}^sK'_j)*K'_i\\
&=F_i*(F_jK'_jK'_i\ot F_i{K'_i}^2K'_j\cdots\ot F_i{K'_i}^{s+1}K'_j)\\
&=F_i\cdot K'_i\ot F_jK'_jK'_i\ot F_i{K'_i}^2K'_j\ot\cdots\ot F_i{K'_i}^{s+1}K'_j\\
&+\sum_{k=0}^s{K'_i}^{-1}\cdot F_jK'_jK'_i\ot{K'_i}^{-1}\cdot F_i{K'_i}^2K'_j\ot\cdots\ot{K'_i}^{-1}\cdot F_i{K'_i}^{k+1}K'_j\\
&\ot F_i\cdot {K'_i}^{k+1}K'_j\ot F_i{K'_i}^{k+2}K'_j\ot\cdots\ot F_i{K'_i}^{s+1}K'_j\\
&=F_iK'_i\ot F_jK'_iK'_j\ot F_i{K'_i}^2K'_j\ot\cdots\ot F_i{K'_i}^{s+1}K'_j\\
&+(s+1)_{q_{ii}}q_{ii}^{-s}q_{ji}^{-1}F_jK'_j\ot F_iK'_iK'_j\cdots\ot F_i{K'_i}^{s+1}K'_j.
\end{split}\]
Subtracting one of two equalities above from the other, it gives $\mb{ad}_r(F_iK'_i)(F_jK'_j\ot F_iK'_iK'_j\cdots\ot F_i{K'_i}^sK'_j)=(s+1)_{q_{ii}}(q_{ij}-q_{ii}^{-s}q_{ji}^{-1})F_jK'_j\ot F_iK'_iK'_j\cdots\ot F_i{K'_i}^{s+1}K'_j$. Hence,
\[\mb{ad}_r(F_iK'_i)^s(F_jK'_j)=(s)_{q_{ii}}!\prod_{k=1}^{s-1}(q_{ij}-q_{ii}^{-k}q_{ji}^{-1})
F_jK'_j\ot F_iK'_iK'_j\cdots\ot F_i{K'_i}^sK'_j,~s\geq1.\]
Especially for $s=1-a_{ij}$, the constraint (\ref{car}) gives $\Psi(u^-_{ij})=\mb{ad}_r(F_iK'_i)^{1-a_{ij}}(F_jK'_j)=0$.

So far we have proved the existence of the Hopf algebra homomorphism $\psi:U_{\q}(\mg_A)\rw U_H(M)$.
Since $U_H(M)$ has the generator set $\{E_i,F_i,{K_i}^{\pm1},{K'_i}^{\pm1}:i\in I\}$,
in which all the elements lie in the image of $\psi$, $\psi$ is surjective.
In order to prove that $\psi$ is also injective,
we note that $(U_{\q})_1=U_{\q}^0+\sum_{i\in I}e_iU_{\q}^0+f_iU_{\q}^0$ by Proposition \ref{cor}.
From the definition of $\psi$, $\psi((U_{\q})_1)=(H+\sum_{i\in I}E_iH+F_iH)+J$ and
it is clear that the restriction $\psi|_{(U_{\q})_1}$ is injective, thus
$\psi$ is also injective due to a theorem of Heyneman and Radford~\cite[Theorem 5.3.1]{Mon}.
\end{proof}

\begin{rem}
When $q_{ii},~i\in I$ are all roots of unity, $U_H(M)$ is actually isomorphic to the small quantum group $u_{\q}(\mg_A)$. Since for any $r\geq1$, we have
\[\begin{split}
E_i&*(E_i{K_i}^{r-1}\ot\cdots\ot E_iK_i\ot E_i)=\sum_{k=0}^rK_i\cdot E_i{K_i}^{r-1}\ot\cdots\ot K_i\cdot E_i{K_i}^{r-k}\\
&\ot E_i\cdot K_i^{r-k}\ot E_iK_i^{r-1-k}\ot\cdots\ot E_i=(r+1)_{q_{ii}}E_i{K_i}^r\ot\cdots\ot E_iK_i\ot E_i,
\end{split}\]
i.e. $E_i^{*r}=(r)_{q_{ii}}!E_i{K_i}^{r-1}\ot\cdots\ot E_iK_i\ot E_i,~r\geq1$. Especially for $r=\mb{ord}(q_{ii})$, we get $E_i^{*r}=0$. Similarly,
\[\begin{split}
F_iK'_i&*(F_iK'_i\ot F_i{K'_i}^2\ot\cdots\ot F_i{K'_i}^r)=\sum_{k=0}^rF_iK'_i\ot\cdots\ot F_i{K'_i}^k\ot F_iK'_i\cdot {K'_i}^k\\
&\ot K'_i\cdot F_i{K'_i}^{k+1}\ot\cdots\ot K'_i\cdot F_i{K'_i}^r=(r+1)_{q_{ii}}F_iK'_i\ot F_i{K'_i}^2\ot\cdots\ot F_i{K'_i}^{r+1},
\end{split}\]
i.e. $(F_iK'_i)^{*r}=(r)_{q_{ii}}!F_iK'_i\ot F_i{K'_i}^2\ot\cdots\ot F_i{K'_i}^r,~r\geq1$. Especially for $r=\mb{ord}(q_{ii})$, $(F_iK'_i)^{*r}=0$. On the other hand, $K_i^{*r}=K_i^r,~{K'_i}^{*r}={K'_i}^r$, hence $H$ should be redefined as
\[\bK[K_i^{\pm1},{K'_i}^{\pm1}:i\in I]/(K_i^{r_i}-1,{K'_i}^{r_i}-1:i\in I),~r_i=\mb{ord}(q_{ii}), i\in I,\]
the group algebra of $\prod_{i\in I}(\bZ/r_i)^2$.
\end{rem}

\noindent{4.2.}
Now suppose there exists another matrix $\lb\hat{q}_{ij}\rb_{i,j\in I}$ over $\bK$ satisfying
\begin{equation}\label{con}
\hat{q}_{ii}=q_{ii},~\hat{q}_{ij}\hat{q}_{ji}=q_{ij}q_{ji},~\forall i,j\in I.
\end{equation}
Define a Yetter-Drinfel'd $H$-module $\hat{W}=\mb{span}_\bK\{\hat{E}_i,\hat{F}_i,\hat{\xi}_i:i\in I\}$ analogous to $W$,
by replacing all the structure constants $q_{ij},i,j\in I$ with $\hat{q}_{ij},i,j\in I$.
That gives the $H$-Hopf bimodule $\hat{M}=\hat{W}\ot H$. Inspired by Proposition 3.9 in \cite{AuS}, we fix a group bicharacter $\si:\Ga\times\Ga\rw\bK^\times$ on $\Ga=\bZ^{2|I|}$ satisfying
\[\begin{array}{l}
\si(K_i,K_j)\si^{-1}(K_j,K_i)=\hat{q}_{ij}q_{ij}^{-1},\\
\si(K'_i,K'_j)\si^{-1}(K'_j,K'_i)=\hat{q}_{ij}q_{ij}^{-1},\\
\si(K'_i,K_j)\si^{-1}(K_j,K'_i)=\hat{q}_{ji}^{-1}q_{ji},
\end{array}\]
for all $i,j\in I$. Since a group bicharacter is naturally a group cocycle, we can linearly extend $\si$ to be a Hopf cocycle on $H$.
Recall that given $V\in{_H^H\mathcal {Y}\mathcal {D}}$, the coaction of $H^\si$ on $V^\si$ is unchanged,
but the action of $H^\si$ on $V^\si$ varies as follows~\cite[Th.2.7]{mo},
\[h\cdot_\si v=\si(h_{(1)},v_{(-1)})\si^{-1}((h_{(2)}\cdot v_{(0)})_{(-1)},h_{(3)})(h_{(2)}\cdot v_{(0)})_{(0)}, ~h\in H, v\in V.\]
In particular, for $W^\si$ we have the following formulas,
\[\begin{array}{l}
K_i\cdot_\si E_j=\si(K_i,K_j)\si^{-1}(K_j,K_i)K_i\cdot E_j=\hat{q}_{ij}E_j,\\
K'_i\cdot_\si E_j=\si(K'_i,K_j)\si^{-1}(K_j,K'_i)K'_i\cdot E_j=\hat{q}^{-1}_{ji}E_j,\\
K_i\cdot_\si F_j=\si(K_i,{K'_j}^{-1})\si^{-1}({K'_j}^{-1},K_i)K_i\cdot F_j=\hat{q}_{ij}^{-1}F_j,\\
K'_i\cdot_\si F_j=\si(K'_i,{K'_j}^{-1})\si^{-1}({K'_j}^{-1},K'_i)K'_i\cdot F_j=\hat{q}_{ji}F_j,\\
K_i\cdot_\si\xi_j=\si(K_i,K_j{K'_j}^{-1})\si^{-1}(K_j{K'_j}^{-1},K_i)K_i\cdot
\xi_j=\xi_j,\\
K'_i\cdot_\si\xi_j=\si(K'_i,K_j{K'_j}^{-1})\si^{-1}(K_j{K'_j}^{-1},K'_i)K'_i\cdot
\xi_j=\xi_j,\\
\end{array}\]
for all $i, j\in I$. As the cocycle twist $H^\si=H$, we thus obtain the following result.
\begin{prop}\label{yd}
There exists an isomorphism between $\hat{W}$ and $W^\si$ in $_H^H\mathcal {Y}\mathcal {D}$,
mapping $\hat{E}_i,\hat{F}_i,\hat{\xi}_i$ to $E_i,F_i,\xi_i$ respectively for all $i\in I$.
\end{prop}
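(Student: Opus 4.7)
The plan is to define the evident linear bijection $\varphi:\hat W\rw W^\si$ on basis vectors by
$\varphi(\hat E_i)=E_i,\ \varphi(\hat F_i)=F_i,\ \varphi(\hat\xi_i)=\xi_i$, and to verify that it is an isomorphism in $_H^H\mathcal{Y}\mathcal{D}$.

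The $H$-colinearity comes essentially for free. Since the cocycle twist $H^\si$ coincides with $H$ as a coalgebra, the left coaction on $W^\si$ is unchanged from that on $W$, namely $\de_L(E_i)=K_i\ot E_i$, $\de_L(F_i)={K'_i}^{-1}\ot F_i$, $\de_L(\xi_i)=K_i{K'_i}^{-1}\ot\xi_i$. The coaction on $\hat W$ is defined by the very same formulas (the parameters $q_{ij}$ enter only into the module structure, not the comodule structure), so $\varphi$ intertwines the two coactions on each basis vector, and hence everywhere by linearity.

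For $H$-linearity, I would simply read off the formulas for $K_i\cdot_\si$ and $K'_i\cdot_\si$ on $E_j,F_j,\xi_j$ displayed immediately above the statement, and compare them with the $H$-action on $\hat W$ obtained from that on $W$ by the substitution $q_{ij}\mapsto\hat q_{ij}$. The three constraints imposed on the bicharacter $\si$, namely $\si(K_i,K_j)\si^{-1}(K_j,K_i)=\hat q_{ij}q_{ij}^{-1}$ together with its two analogues, were chosen precisely so that these twisted action formulas coincide term-by-term with the defining action on $\hat W$; the $\xi_j$-entries produce $1$ on both sides, and the diagonal equality $\hat q_{ii}=q_{ii}$ is consistent with these (and built into the hypothesis \eqref{con}). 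Thus $K_i\cdot_\si\varphi(\hat x)=\varphi(K_i\cdot\hat x)$ and similarly for the $K'_i$, and, by multiplicativity of the $H$-action, for every $h\in H$.

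Since both $\hat W$ and $W^\si$ are Yetter-Drinfel'd $H$-modules (the former by direct construction, the latter by the general theory of cocycle twists applied to $W\in{_H^H\mathcal{Y}\mathcal{D}}$ recalled in \cite{mo}), and $\varphi$ is a bijective $H$-linear, $H$-colinear map, it is automatically an isomorphism in $_H^H\mathcal{Y}\mathcal{D}$. There is no real obstacle beyond unpacking: the decisive computation has already been performed in the list of $\cdot_\si$-formulas preceding the proposition, and the choice of $\si$ has been engineered to make the comparison tautological.
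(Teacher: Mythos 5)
Your proposal is correct and follows essentially the same route as the paper: the paper's proof of Proposition \ref{yd} consists precisely of the displayed computations of $K_i\cdot_\si$ and $K'_i\cdot_\si$ on $E_j,F_j,\xi_j$ via the twisting formula of \cite{mo}, together with the observation that $H^\si=H$ and the coaction is unchanged, so the obvious basis-to-basis bijection is the desired isomorphism in $_H^H\mathcal{Y}\mathcal{D}$. Your additional remarks (checking $H$-linearity only on the group-like generators, and colinearity being automatic) are exactly the unstated routine steps the paper leaves to the reader.
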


\begin{rem}
One can also twist the $H$-Hopf bimodule $M$ via $\si$ directly,
and the two-sided actions of $H^\si$ on $M^\si$ are as follows~\cite[Th.2.5]{mo},
\[\begin{array}{l}
h\bullet_\si m=\si(h_{(1)},m_{(-1)})\si^{-1}(h_{(3)},m_{(1)})h_{(2)}\bullet m_{(0)},\\
m\bullet_\si h=\si(m_{(-1)},h_{(1)})\si^{-1}(m_{(1)},h_{(3)})m_{(0)}\bullet h_{(2)},
\end{array}\]
for all $h\in H,m\in M$. Now we have the following isomorphism of $H$-Hopf bimodules.
\[\phi:M^\si\rw W^{\si}\ot H,~x\ot K\mapsto \si(x_{(-1)},K^{-1})x_{(0)}\ot K, ~x\in W, K\in\Ga.\]
We also abuse the notation $\phi$ to denote the isomorphism $M^\si\cong W^\si\ot H\cong\hat{M}$.
Due to the universal property of cotensor coalgebras, we know that $\phi$ induces a coalgebra isomorphism $\Phi:T_H^c(M^\si)\cong T_H^c(\hat{M})$.
Using formula (\ref{mu}), we have $\Phi=\pi_0+\sum_{n\geq1}(\phi\circ\pi_1)^{\ot n}\De_c^{(n-1)}$, where $\pi_0,\pi_1$ are the projections to $H,M$ respectively.
\end{rem}

Next we define a QQSA structure on $T_H^c(M^\si)$ as follows.
Let $(f^\si,g^\si)$ be the corresponding pair on $T_H^c(M^\si)$ of quasi-symmetric type.
We only need to define $f^\si_{11}=\al_\si$. According to Proposition \ref{yd},
we can identify the $H$-Hopf bimodules $W^\si\ot H$ with $\hat{M}$ and define
\[\al_\si=\phi^{-1}\circ\al_{\hat{M}}\circ(\phi\ot\phi).\]
Similarly we can take the Hopf quotient $U_H(M^\si):=Q_H(M^\si)/J^\si$,
where $J^\si$ is the Hopf ideal of $Q_H(M^\si)$ generated by $\{\xi_i-K_i{K'_i}^{-1}+1:i\in I\}$.
It is easy to see that $\Phi$ becomes an isomorphism of Hopf algebras when we endow
$T_H^c(M^\si)$ with such QQSA structure. Moreover,
it naturally induces a Hopf algebra isomorphism $\bar{\Phi}$ between $U_H(M^\si)$ and $U_H(\hat{M})$, as $\xi_i-K_i{K'_i}^{-1}+1$ is fixed by $\Phi$.

Now we can inflate the group bicharacter $\si$ to be a Hopf 2-cocycle of $U_\q(\mg_A)$ as in \cite[Prop. 27]{PHR},
thus also obtain a Hopf 2-cocycle $\si$ of $U_H(M)$ due to Theorem \ref{iso}. That is, $\si$ satisfies
\[\begin{array}{l}
\si(\om_i,\om_j)\si^{-1}(\om_j,\om_i)=\hat{q}_{ij}q_{ij}^{-1},\\
\si(\om'_i,\om'_j)\si^{-1}(\om'_j,\om'_i)=\hat{q}_{ij}q_{ij}^{-1},\\
\si(\om'_i,\om_j)\si^{-1}(\om_j,\om'_i)=\hat{q}_{ji}^{-1}q_{ji},
\end{array}\]
for all $i, j\in I$ and gives $0$ for other homogeneous basis elements out of $U_\q(0)$.
It pushes us to consider the twist equivalence between multi-parameter quantum groups.
\begin{theorem}\label{tw}
Denote $\hat{\q}:=(\hat{q}_{ij})_{i,j\in I}$, which holds the condition (\ref{con}).
Then we have the following Hopf algebra isomorphism:
\[U_{\hat{\q}}(\mg_A)\cong U_\q(\mg_A)^\si,\]
where $U_\q(\mg_A)^\si$ is the Hopf 2-cocycle deformation of $U_\q(\mg_A)$.
In particular, if there exists $q\in\bK^\times$ such that $q_{ii}=q^{2d_i},~i\in I$,
we can take $\hat{q}_{ij}=q^{d_ia_{ij}}$ for all $i,j\in I$.
Then $U_\q(\mg_A)^\si$ is isomorphic to the one parameter version $U_{q,q^{-1}}(\mg_A)$,
proved in \cite[Theorem 28]{PHR} with another different Hopf $2$-cocycle.

\begin{proof}
Let $m^{\si}$ be the multiplication map of $U_\q(\mg_A)^\si$. Denote $a\di b:=m^{\si}(a,b)$ for $a, b\in U_\q(\mg_A)$. It
suffices to check the relations:
\begin{eqnarray*}
 &(R'1)&\quad \om_i^{\pm1}\di\om_j'^{\pm1}=\om_j'^{\pm1}\di\om_i^{\pm1},\quad
\om_i^{\pm1}\di\om_i^{\mp1}=\om_i'^{\pm1}\om_i'^{\mp1}=1,\\
 &(R'2)&\quad \om_i^{\pm1}\di\om_j^{\pm1}=\om_j^{\pm1}\di\om_i^{\pm1},
 \quad \om_i'^{\pm1}\di\om_j'^{\pm1}=\om_j'^{\pm1}\di\om_i'^{\pm1},\\
&(R'3)&\quad \om_i\di e_j\di\om_i^{-1}=\hat{q}_{ij} e_j,
\qquad \om_i'\di e_j \di\om_i'^{-1}=\hat{q}_{ji}^{-1} e_j,
\\
&(R'4)&\quad \om_i\di f_j\di\om_i^{-1}=\hat{q}_{ij}^{-1} f_j,\qquad
\om_i'\di f_j \di\om_i'^{-1}=\hat{q}_{ji} f_j,\\
&(R'5)&\quad e_i\di f_j-f_j\di e_i=\delta_{i,j}\frac{\hat{q}_{ii}}{\hat{q}_{ii}-1}({\om_i-\om_i'}),\\
&(R'6)&\quad \sum_{k=0}^{1-a_{ij}} (-1)^{k}
\binom{1-a_{ij}}{k}_{\hat{q}_{ii}}\hat{q}_{ii}^{\frac{k(k-1)}{2}}
\hat{q}_{ij}^{k}e_{i}^{\di(1-a_{ij}-k)}\di e_{j}\di e_{i}^{\di k} =0 \quad (i\ne j),\\
&(R'7)&\quad\sum_{k=0}^{1-a_{ij}} (-1)^{k}
\binom{1-a_{ij}}{k}_{\hat{q}_{ii}}\hat{q}_{ii}^{\frac{k(k-1)}{2}}
\hat{q}_{ij}^{k}f_{i}^{k}\di f_{j}\di f_{i}^{\di(1-a_{ij}-k)}=0 \quad (i\ne j).
\end{eqnarray*}
Since
\begin{gather*}
\De^2(\om_i)=\om_i\ot \om_i\ot \om_i,~\De^2(\om_i')=\om_i'\ot \om_i'\ot \om_i',\\
\De^2(e_i)=e_i\ot 1\ot1+\om_i\ot e_i\ot 1+\om_i\ot \om_i\ot e_i,\\
\De^2(f_i)= 1\ot1\ot f_i+1\ot f_i\ot \om_i'+f_i\ot \om_i'\ot \om_i'.
\end{gather*}
It is straightforward to check $(R'1)$ and $(R'2)$. For $(R'3)$
and $(R'4)$:
\begin{align*}
\om_i\di e_j&=\si(\om_i,\om_j)\om_ie_j=\si(\om_i,\om_j)q_{ij}e_j\om_i\\
&=\si(\om_i,\om_j)\si^{-1}(\om_j,\om_i)q_{ij}e_j\di\om_i=\hat{q}_{ij}e_j\di\om_i,
\end{align*}
\begin{align*}
\om'_i\di e_j&=\si(\om'_i,\om_j)\om'_ie_j=\si(\om'_i,\om_j)q_{ji}^{-1}e_j\om'_i\\
&=\si(\om'_i,\om_j)\si^{-1}(\om_j,\om'_i)q_{ji}^{-1}e_j\di\om'_i=\hat{q}_{ji}^{-1}e_j\di\om'_i,
\end{align*}
\begin{align*}
\om_i\di f_j&=\si^{-1}(\om_i,\om'_j)\om_if_j=\si^{-1}(\om_i,\om'_j)q_{ij}^{-1}f_j\om_i\\
&=\si(\om'_j,\om_i)\si^{-1}(\om_i,\om'_j)q_{ij}^{-1}f_j\di\om_i=\hat{q}_{ij}^{-1}f_j\di\om_i,
\end{align*}
\begin{align*}
\om'_i\di f_j&=\si^{-1}(\om'_i,\om'_j)\om'_if_j=\si^{-1}(\om'_i,\om'_j)q_{ji}f_j\om'_i\\
&=\si(\om'_j,\om'_i)\si^{-1}(\om'_i,\om'_j)q_{ji}f_j\di\om'_i=\hat{q}_{ji}f_j\di\om'_i,
\end{align*}
For $(R'5)$:
$$
e_i\di f_j-f_j\di e_i=e_if_j-f_je_i=\delta_{i,j}\frac{\hat{q}_{ii}}{\hat{q}_{ii}-1}(\om_i-\om_i').
$$
For $(R'6)$:
\begin{align*}
e_{i}^{\di(1-a_{ij}-k)}\di e_{j}\di e_{i}^{\di k}&=\prod_{r=0}^{-a_{ij}-k}\si(\om_i,\om_i^r\om_j)\prod_{r=0}^{k-1}
\si(\om_i^{1-a_{ij}-k+r}\om_j,\om_i)e_{i}^{1-a_{ij}-k} e_{j}e_{i}^{k}\\
&=\si(\om_i,\om_i)
^{\frac{-a_{ij}(1-a_{ij})}{2}}\si(\om_i,\om_j)^{1-a_{ij}-k}\si(\om_j,\om_i)^k
e_{i}^{1-a_{ij}-k} e_{j}e_{i}^{k}\\
&=\si(\om_i,\om_i)
^{\frac{-a_{ij}(1-a_{ij})}{2}}\si(\om_i,\om_j)^{1-a_{ij}}\hat{q}_{ji}^kq_{ji}^{-k}
e_{i}^{1-a_{ij}-k} e_{j}e_{i}^{k}.
\end{align*}
Hence
\begin{eqnarray*}
&&\sum_{k=0}^{1-a_{ij}} (-1)^{k}
\binom{1-a_{ij}}{k}_{\hat{q}_{ii}}\hat{q}_{ii}^{\frac{k(k-1)}{2}}\hat{q}_{ij}^k
e_{i}^{\di(1-a_{ij}-k)}\di e_{j}\di e_{i}^{\di k} \\
&=&\si(\om_i,\om_i)
^{\frac{-a_{ij}(1-a_{ij})}{2}}\si(\om_i,\om_j)^{1-a_{ij}}\sum_{k=0}^{1-a_{ij}} (-1)^{k}
\binom{1-a_{ij}}{k}_{\hat{q}_{ii}}\hat{q}_{ii}^{\frac{k(k-1)}{2}}\hat{q}_{ij}^k\hat{q}_{ji}^kq_{ji}^{-k}
e_{i}^{1-a_{ij}-k}e_{j}e_{i}^k\\
&=&\si(\om_i,\om_i)
^{\frac{-a_{ij}(1-a_{ij})}{2}}\si(\om_i,\om_j)^{1-a_{ij}}\sum_{k=0}^{1-a_{ij}} (-1)^{k}
\binom{1-a_{ij}}{k}_{q_{ii}}q_{ii}^{\frac{k(k-1)}{2}}q_{ij}^k
e_{i}^{1-a_{ij}-k}e_{j}e_{i}^k=0.
\end{eqnarray*}
For $(R'7)$:
\begin{align*}
f_{i}^{\di k}\di f_{j}\di f_{i}^{\di(1-a_{ij}-k)}&=\prod_{r=0}^{k-1}\si^{-1}(\om'_i,{\om'_i}^r\om'_j)\prod_{r=0}^{-a_{ij}-k}
\si^{-1}({\om'_i}^{k+r}\om'_j,\om'_i)f_{i}^kf_{j}f_{i}^{1-a_{ij}-k}\\
&=\si(\om'_i,\om'_i)
^{\frac{a_{ij}(1-a_{ij})}{2}}\si(\om'_i,\om'_j)^{-k}\si(\om'_j,\om'_i)^{-(1-a_{ij}-k)}
f_{i}^kf_{j}f_{i}^{1-a_{ij}-k}\\
&=\si(\om'_i,\om'_i)
^{\frac{a_{ij}(1-a_{ij})}{2}}\si(\om'_j,\om'_i)^{-(1-a_{ij})}\hat{q}_{ji}^kq_{ji}^{-k}
f_{i}^kf_{j}f_{i}^{1-a_{ij}-k}.
\end{align*}
Hence
\begin{eqnarray*}
&&\sum_{k=0}^{1-a_{ij}} (-1)^{k}
\binom{1-a_{ij}}{k}_{\hat{q}_{ii}}\hat{q}_{ii}^{\frac{k(k-1)}{2}}\hat{q}_{ij}^k
f_{i}^{\di k}\di f_{j}\di f_{i}^{\di(1-a_{ij}-k)}\\
&=&\si(\om'_i,\om'_i)
^{\frac{a_{ij}(1-a_{ij})}{2}}\si(\om'_j,\om'_i)^{-(1-a_{ij})}\sum_{k=0}^{1-a_{ij}} (-1)^{k}
\binom{1-a_{ij}}{k}_{\hat{q}_{ii}}\hat{q}_{ii}^{\frac{k(k-1)}{2}}\hat{q}_{ij}^k\hat{q}_{ji}^kq_{ji}^{-k}
f_{i}^kf_{j}f_{i}^{1-a_{ij}-k}\\
&=&\si(\om'_i,\om'_i)
^{\frac{a_{ij}(1-a_{ij})}{2}}\si(\om'_j,\om'_i)^{-(1-a_{ij})}\sum_{k=0}^{1-a_{ij}} (-1)^{k}
\binom{1-a_{ij}}{k}_{q_{ii}}q_{ii}^{\frac{k(k-1)}{2}}q_{ij}^k
f_{i}^kf_{j}f_{i}^{1-a_{ij}-k}=0.
\end{eqnarray*}
The proof is complete.
\end{proof}
\end{theorem}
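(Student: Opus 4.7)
The plan is to define a Hopf algebra homomorphism $\Psi \colon U_{\hat{\q}}(\mg_A) \to U_\q(\mg_A)^\si$ sending each generator of $U_{\hat{\q}}$ to the element of $U_\q^\si$ bearing the same name, then verify bijectivity. Since a cocycle twist leaves the underlying coalgebra and vector space unchanged and only deforms the multiplication to $a \di b = \si(a_{(1)}, b_{(1)}) \, a_{(2)} b_{(2)} \, \si^{-1}(a_{(3)}, b_{(3)})$, well-definedness amounts to checking that each $\hat{\q}$-relation (R$'$1)--(R$'$7) holds in $U_\q^\si$. Bijectivity is then automatic because $\Psi$ matches generators and the two Hopf algebras share the same underlying vector space (twisting back by $\si^{-1}$ recovers $U_\q$).

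The key simplification is that $\si$ has been inflated to vanish outside $U_\q(0) \otimes U_\q(0)$, so only those summands of $\De^2$ whose outer tensor factors are group-like can contribute to $a \di b$. For $\De^2(e_i)$ and $\De^2(f_i)$, which each consist of three summands computed from Proposition~\ref{Hopf}, this selection rule forces the unique surviving summand $(\om_i, e_i, 1)$ for $e_i$ and $(1, f_i, \om_i')$ for $f_i$. Consequently, (R$'$1) and (R$'$2) reduce to the commutativity of the bicharacter restricted to group-likes; (R$'$3) and (R$'$4) each yield a single scalar factor $\si(\om_i, \om_j)\si^{-1}(\om_j, \om_i)$ (or its $\om'$-analogue), which by the defining property of $\si$ equals $\hat{q}_{ij} q_{ij}^{-1}$ and converts $q_{ij}$ to $\hat{q}_{ij}$. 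For (R$'$5), the same selection rule gives $e_i \di f_j = e_i f_j$ and $f_j \di e_i = f_j e_i$ with trivial cocycle prefactors, so the commutator is unchanged; since $\hat{q}_{ii} = q_{ii}$, the Cartan-part coefficient on the right is also preserved.

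The substantive calculation is (R$'$6) (and symmetrically (R$'$7)). Iterating the formula for $\di$, the twisted monomial $e_i^{\di(1-a_{ij}-k)} \di e_j \di e_i^{\di k}$ should factor as a product of (i) a $k$-independent global scalar built from $\si(\om_i,\om_i)^{-a_{ij}(1-a_{ij})/2} \si(\om_i,\om_j)^{1-a_{ij}}$, (ii) the underlying untwisted monomial $e_i^{1-a_{ij}-k} e_j e_i^k$, and (iii) a $k$-dependent factor $\hat{q}_{ji}^k q_{ji}^{-k}$ arising when the group-like part $\om_j$ of $e_j$ is moved past the $\om_i$'s of the flanking $e_i$'s via the third twisting relation. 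Combined with the $\hat{q}_{ij}^k$ prefactor in (R$'$6) and the identity $\hat{q}_{ij}\hat{q}_{ji} = q_{ij}q_{ji}$, one obtains $\hat{q}_{ij}^k \hat{q}_{ji}^k q_{ji}^{-k} = q_{ij}^k$; together with $\hat{q}_{ii} = q_{ii}$ (which preserves both the binomials and the $\hat{q}_{ii}^{k(k-1)/2}$ factor), the $\hat{\q}$-Serre sum collapses to a nonzero scalar multiple of the $\q$-Serre sum, which vanishes in $U_\q$. The main anticipated obstacle is precisely the bookkeeping at this step---tracking which of the many summands in $\De^2$ of an iterated product actually survive the $\si$-selection rule---but once one recognizes that only the configuration whose first factor is the full group-like $\om_i^{1-a_{ij}-k} \om_j \om_i^k$ and whose third factor is $1$ can contribute, the computation reduces to a direct induction on the length. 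The one-parameter specialization then follows by taking $\hat{q}_{ij} = q^{d_i a_{ij}}$, which satisfies (\ref{con}) whenever $q_{ii} = q^{2d_i}$.
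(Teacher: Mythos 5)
Your proposal is correct and follows essentially the same route as the paper: check that the $\hat{\q}$-relations (R$'$1)--(R$'$7) hold in $U_\q(\mg_A)^\si$ by using the fact that the inflated cocycle vanishes outside the group-like part, so that only the summands $(\om_i,e_i,1)$ and $(1,f_i,\om_i')$ of $\De^2$ survive, and then converting the twisted Serre sums into scalar multiples of the untwisted ones via $\hat{q}_{ij}^k\hat{q}_{ji}^kq_{ji}^{-k}=q_{ij}^k$. The only minor difference is that you make the invertibility of the twist (hence bijectivity of the map) explicit, which the paper leaves implicit.
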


On the other hand, the group bicharacter $\si$ induces a Hopf $2$-cocycle of $T_H^c(M)$,
also denoted by $\si$, via the projection $\pi_0$ of $T_H^c(M)$ to $H$.
Here we compare $T_H^c(M^\si)$ with $T_H^c(M)^\si$ as follows.
\begin{prop}\label{id}
The identity map on the underlying space $T_H^c(M)$ provides a Hopf algebra
isomorphism between $T_H^c(M^\si)$ and $T_H^c(M)^\si$, if $\si(K_i,K'_i)=1$ for all $i\in I$.
\end{prop}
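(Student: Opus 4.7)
The strategy is to apply the universal property of the cotensor coalgebra (Proposition \ref{uni}) and compare the two Hopf algebra structures via their defining quasi-symmetric pairs. First, since a cocycle twist affects only the module actions and not the coactions, and since $\De_c$ depends on $M$ only through its $H$-bicomodule structure, the coalgebras $T_H^c(M^\si)$ and $T_H^c(M)$ coincide; the deformation $T_H^c(M)^\si$ also keeps this same coalgebra by definition. Thus the identity on $T_H^c(M)$ is automatically a coalgebra isomorphism, and the claim reduces to verifying that its two multiplications agree (the antipode compatibility then follows from the bialgebra axioms).

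Both multiplications arise as coalgebra maps built from quasi-symmetric pairs, so by the universal property it suffices to verify the equality of these pairs after composition with $\pi$ and $p$, and only on the bi-degrees $(p,q)\in\{(0,0),(0,1),(1,0),(1,1)\}$ where a quasi-symmetric pair is allowed to be nonzero. For $(0,0)$ the bicharacter property of $\si$ together with the commutativity of $H$ gives $h_1 *^\si h_2 = h_1 h_2$, matching $g^\si_{00}=m_H$. For $(0,1)$, expanding $h *^\si m$ via $\De_c^{(2)}(m)$ and noting that the only term surviving under $(\pi\ot\mi\ot\pi)$ is determined by the left and right $H$-coactions on $m$ gives $h *^\si m = \si(h_{(1)},m_{(-1)})(h_{(2)}\bullet m_{(0)})\si^{-1}(h_{(3)},m_{(1)})$, which is exactly the twisted left action $h\bullet_\si m$ on $M^\si$; the $(1,0)$ case is symmetric.

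The decisive check is bi-degree $(1,1)$. By formula (\ref{mu}), $m_1 * m_2$ in $T_H^c(M)$ decomposes as $\al(m_1\ot m_2)$ plus a genuinely degree-$(1,1)$ term living in $M\ull{\ot}M$, so $p(m_1 * m_2)=\al(m_1\ot m_2)$ and $\pi(m_1*m_2)=0$. On the generating vectors $m_1=E_iK$, $m_2=F_jK'$, using the coactions $\de_L(E_iK)=K_iK\ot E_iK$, $\de_L(F_jK')={K'_j}^{-1}K'\ot F_jK'$ and the trivial right coactions, the cocycle-twisted product reduces (by the bicharacter property of $\si$) to a scalar multiple of $\al(E_iK\ot F_jK')$. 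Independently, $\al_\si(E_iK\ot F_jK')=\phi^{-1}\al_{\hat{M}}(\phi\ot\phi)(E_iK\ot F_jK')$ computed using $\phi(x\ot K)=\si(x_{(-1)},K^{-1})x_{(0)}\ot K$ together with $\hat{q}_{ii}=q_{ii}$ yields another scalar multiple of the same element. The defining cocycle relations for $\si$ in the three families $\si(K_i,K_j),\si(K'_i,K'_j),\si(K'_i,K_j)$, combined with a comparison of the action scalars $\hat{\la}/\la$ producing the structure constant of $\al$ on each side, show that the two scalars agree up to a residual factor $\si(K_i,{K'_i}^{-1})$ arising only when $i=j$; the hypothesis $\si(K_i,K'_i)=1$ is precisely what eliminates this last obstruction. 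Tracking these overlapping cocycle contributions in the $(1,1)$ case is the main technical difficulty, after which the identity is a Hopf algebra isomorphism.
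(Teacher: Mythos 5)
Your proposal is correct and follows essentially the same route as the paper: you verify that the twisted product agrees with the untwisted one on bidegrees $(0,0)$, $(0,1)$, $(1,0)$ (recovering the twisted actions $\cdot_\si$) and then show $\al_\si=\si\star\al\star\si^{-1}$ on the generators $E_iK\ot F_jK'$, with the residual factor $\si(K_i,{K'_i}^{-1})$ killed exactly by the hypothesis $\si(K_i,K'_i)=1$. The only cosmetic difference is that you conclude via the uniqueness clause of Proposition \ref{uni}, whereas the paper concludes via the explicit formula (\ref{mu}); these are the same mechanism.
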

\begin{proof}
First we note that the degree one component $T_H^c(M)^\si_1$ is isomorphic to $M^\si$ as $H$-Hopf bimodules:
\[\begin{split}
h*_\si m&=\si(h_{(1)},m_{(-1)})\si^{-1}(h_{(3)},m_{(1)})h_{(2)}*m_{(0)}\\
&=\si(h_{(1)},m_{(-1)})\si^{-1}(h_{(3)},m_{(1)})h_{(2)}\cdot m_{(0)}=h\cdot_\si m,
\end{split}
\]
and $m*_\si h=m\cdot_\si h$ similarly for all $h\in H,m\in M$.
Meanwhile, we show that $\al_\si=\si\star\al\star\si^{-1}$, where $\star$ is the convolution product on $\mb{Hom}(T_H^c(M)^{\ot2}, T_H^c(M))$.
In fact, for any monomials $K, K'\in H$,
\[\begin{array}{l}
\begin{split}
\al_\si(E_iK\ot F_jK')&=\phi^{-1}\circ\al_{\hat{M}}\circ(\phi\ot\phi)(E_iK\ot F_jK')\\
&=\si(K_i,K^{-1})\si({K'_j}^{-1},K'^{-1})\si(K_i{K'_i}^{-1},KK')\al_{\hat{M}}(E_iK\ot F_jK')\\
&=\si(K_i,K')\si(K,{K'_i}^{-1})\al(E_iK\ot F_jK'),
\end{split}\\
\begin{split}
\si\star\al\star\si^{-1}(E_iK\ot F_jK')&=\si(K_iK,{K'_j}^{-1}K')\si^{-1}(K,K')\al(E_iK\ot F_jK')\\
&=\si(K_i,{K'_i}^{-1})\si(K_i,K')\si(K,{K'_i}^{-1})\al(E_iK\ot F_jK').
\end{split}
\end{array}
\]
As $\si(K_i,K'_i)=1$ for all $i\in I$, we know that both sides are equal. For other bases, both sides give $0$.
Now let $*'$ be the multiplication of $T_H^c(M^\si)$. By the formula (\ref{mu}) and the above computation, we have
\[
*'=g^\si+\sum_{n\geq1}{f^{\si}}^{\ot n}\circ \De^{(n-1)}=\si\star\lb g+\sum_{n\geq1}f^{\ot n}\circ \De^{(n-1)}\rb \star\si^{-1}=\si\star *\star\si^{-1}.
\]
Hence, we can identify the multiplications of $T_H^c(M^\si)$ and $T_H^c(M)^\si$.
\end{proof}
When $\si$ also satisfies the extra condition in Proposition \ref{id}, we can identify $Q_H(M^\si)$ with $Q_H(M)^\si$. Abusing the notation $J^\si$ to denote the  Hopf ideal of $Q_H(M)^\si$ parallel to that of $Q_H(M^\si)$, we get the Hopf quotient $U^\si:=Q_H(M)^\si/J^\si$, naturally isomorphic to $U_H(M^\si)$. Combining it with Theorems \ref{iso}, \ref{tw}, we have the following commutative diagram of Hopf algebras:
\[\xymatrix@=2em{U_\q(\mg_A)^\si\ar@{->}[r]^-{\sim}\ar@{->}[d]^-{\Psi}&U_{\hat{\q}}(\mg_A)\ar@{->}[d]^-{\Psi}\\
U_H(M)^\si\ar@{->}[r]^-{\sim}&U_H(\hat{M})\\
U^\si\ar@{->}[r]^-{\sim}\ar@{->}[u]_-{\bar{\Phi}}&U_H(M^\si)\ar@{->}[u]_-{\bar{\Phi}}}\]
where all the horizontal isomorphisms are natural identifications.

\section{Quantum quasi-symmetric construction of irreducible representations}

\noindent{5.1.}
In this section, we use the coinvariant subspace of the degree one component in the QQSAs to realize the irreducible module of multi-parameter quantum groups. First we recall the representation theory of $U_{\q}$ described in \cite{PHR}.

\begin{defn}
The category $\mO^{\q}$  consists of $U_{\q}(\mg_A)$-modules
$V^{\q}$ with the following conditions satisfied:

$(1)$\ $V^{\q}$ has a weight space decomposition
$V^{\q}=\bigoplus_{\la\in\La}V^{\q}_{\la}$, where
$$
V^{\q}_{\la}=\{v\in V^{\q}\mid\om_{i}v=q_{\al_i\la}v,\
\om_{i}'v=q_{\la\al_i}^{-1}v, \ \forall\; i\in I\}
$$
and $\dim V_{\la}^{\q}<\infty$ for all $\la\in\La$.

$(2)$\ There exist a finite number of elements $\la_1,\dots,
\la_t\in \La$ such that the weight set
$$
\wt (V^{\q})\subset D(\la_1)\cup\cdots\cup D(\la_t),
$$
where $D(\la_i):=\{\mu\in\La\,|\,\mu<\la_i\}$.

$(3)$\  $e_i, i\in I$ are locally nilpotent on $V^{\q}$.

The morphisms are taken to be usual $U_{\q}(\mg_A)$-module
homomorphisms. If $f_i, \,i\in I$ are also locally nilpotent, we get a subcategory $\mO_{int}^{\q}$ consisting of integrable modules.
\end{defn}

For any $\la\in\La$ and $i\in I$, write $\lan\la,\al_i^\vee\ran:=\dfrac{2(\la,\al_i)}{(\al_i,\al_i)}$ as usual, then it is easy to see that
\begin{equation}\label{wei}
q_{\al_i\la}q_{\la\al_i}=q_{ii}^{\lan\la,\al_i^\vee\ran}.
\end{equation}

\begin{prop}[{\cite[Prop. 41, 58]{PHR}}]\label{iteg}
For the generic case, let $V^{\q}(\la)$ be an irreducible highest weight
module with highest weight vector $v_{\la}$. Then $V^{\q}(\la)$ belongs to category $\mO^{\q}_{int}$ if and only if $\la\in \La^+$.
Moreover, $\mO^{\q}_{int}$ is a semisimple category in the generic case, $V^{\q}(\la)\cong U_{\q}/J$ as $U_{\q}$-modules,
where $J$ is the left ideal of $U_{\q}$ generated by \[e_i,\quad \om_i-q_{\al_i\la}\cdot1,\quad\om'_i-q_{\la\al_i}^{-1}\cdot1,\quad f_i^{1+\lan\la,\al_i^\vee\ran},\qquad\forall\; i\in I.\]
\end{prop}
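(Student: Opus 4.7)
My plan is to follow the standard template of Kac–Lusztig highest-weight theory, but adapted to the multi-parameter setting; the central task is to show that the two-parameter deformation does not disturb the classical rank-one computations, which is precisely what the key identity (\ref{wei}) guarantees.

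First I would construct the Verma module $M^{\q}(\la):=U_{\q}/J_0$, where $J_0$ is the left ideal generated by $e_i,\,\om_i-q_{\al_i\la}\cdot1,\,\om'_i-q_{\la\al_i}^{-1}\cdot1$ for $i\in I$. Using the triangular decomposition $U_{\q}\cong U_{\q}^-\ot U_{\q}^0\ot U_{\q}^+$ (which follows from the Drinfeld double realization used in Proposition \ref{cor}), one shows $M^{\q}(\la)\cong U_{\q}^-$ as $U_{\q}^-$-modules and that $M^{\q}(\la)$ has a unique maximal proper submodule $N(\la)$, yielding the irreducible quotient $V^{\q}(\la):=M^{\q}(\la)/N(\la)$. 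Any irreducible highest-weight module with highest weight $\la$ must coincide with $V^{\q}(\la)$, so it remains to determine when $V^{\q}(\la)\in\mO^{\q}_{int}$ and identify $N(\la)$ explicitly.

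The heart of the matter is the rank-one computation
\[
[\,e_i,f_i^{n+1}\,]v_\la
=(n+1)_{q_{ii}}\,\frac{q_{ii}}{q_{ii}-1}\,
\bigl(q_{ii}^{-n}\om_i-\om'_i\bigr)f_i^{n}v_\la,
\]
obtained by induction from (R5) together with (R3), (R4). On the highest-weight vector this becomes $(n+1)_{q_{ii}}\frac{q_{ii}}{q_{ii}-1}\bigl(q_{ii}^{-n}q_{\al_i\la}-q_{\la\al_i}^{-1}\bigr)f_i^{n}v_\la$, and (\ref{wei}) rewrites the bracket as $q_{\la\al_i}^{-1}\bigl(q_{ii}^{\lan\la,\al_i^\vee\ran-n}-1\bigr)$. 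Hence for $n=\lan\la,\al_i^\vee\ran$ the vector $f_i^{n+1}v_\la$ is annihilated by all $e_j$ (the cases $j\neq i$ being trivial as weights differ), so $f_i^{1+\lan\la,\al_i^\vee\ran}v_\la$ is a primitive vector whose quotient is integrable. In the generic case, $q_{ii}$ is not a root of unity, so this is the \emph{only} way to produce a singular vector in the $f_i$-chain; iterating through all $i$ one sees that $N(\la)$ is generated by the vectors $f_i^{1+\lan\la,\al_i^\vee\ran}v_\la$ exactly when $\la\in\La^+$, giving the presentation $V^{\q}(\la)\cong U_{\q}/J$. Conversely, local nilpotency of $f_i$ on any integrable $V^{\q}(\la)$ forces $\lan\la,\al_i^\vee\ran\in\bN$ via the same chain analysis applied to the rank-one subalgebra generated by $e_i,f_i,\om_i^{\pm1},\om_i'^{\pm1}$.

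For semisimplicity, I would adapt the standard argument: define a quantum Casimir-type operator $\Omega$ on $\mO^{\q}_{int}$ (or, more economically, use a contravariant form argument on each block). The key observation is that on a highest-weight vector of weight $\la\in\La^+$, $\Omega$ acts by a scalar $c_\la$ depending only on $(\la+2\rho,\la)$-type data via the $q_{ij}$, and the separation of eigenvalues $c_\la\neq c_\mu$ for distinct dominant weights $\la,\mu$ occurring in a fixed module permits the standard Ext-vanishing argument. Combined with the explicit description of $V^{\q}(\la)$ just obtained, one concludes that any module in $\mO^{\q}_{int}$ decomposes as a direct sum of irreducibles.

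The main obstacle will be the rank-one reduction: in the multi-parameter setting one cannot simply quote the one-parameter result, because the commutation constants involve $q_{\al_i\la}$ and $q_{\la\al_i}$ separately rather than their product. The identity (\ref{wei}) is what saves the argument, collapsing the two-parameter data into the single parameter $q_{ii}^{\lan\la,\al_i^\vee\ran}$ on highest-weight vectors; verifying that this collapse persists throughout the entire $f_i$-chain (not just at the singular vector) and through the Serre-type check for compatibility between different $i$ requires some care, but otherwise the argument proceeds in parallel with \cite[Prop.~41, 58]{PHR}.
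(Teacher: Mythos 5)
The paper does not actually prove this proposition: it is imported verbatim from \cite[Prop.~41, 58]{PHR}, so there is no internal argument to compare yours against. Your reconstruction follows the standard highest-weight template that \cite{PHR} itself uses, and the computation you put at the centre is correct: expanding $[e_i,f_i^{n+1}]$ via (R3)--(R5) gives $(n+1)_{q_{ii}}\frac{q_{ii}}{q_{ii}-1}\,f_i^n\bigl(q_{ii}^{-n}\om_i-\om_i'\bigr)$, which on $v_\la$ equals $(n+1)_{q_{ii}}\frac{q_{ii}}{q_{ii}-1}\,q_{\la\al_i}^{-1}\bigl(q_{ii}^{\lan\la,\al_i^\vee\ran-n}-1\bigr)f_i^nv_\la$ by (\ref{wei}), and in the generic case this vanishes only at $n=\lan\la,\al_i^\vee\ran$. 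That identity is indeed the precise point where the two-parameter data collapses to the single parameter $q_{ii}$.

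The one place to tighten is the logical order in your third paragraph. The identification of the maximal submodule $N(\la)$ with the submodule generated by the vectors $f_i^{1+\lan\la,\al_i^\vee\ran}v_\la$ does not follow from the rank-one chain analysis alone: one must first show that the quotient $U_{\q}/J$ is integrable (local nilpotency of every $f_j$, not just $f_i$, on the quotient, which uses the Serre relations (R7)), and then invoke the fact that an integrable highest-weight module with dominant highest weight is irreducible --- and that last fact is exactly what the Casimir (or contravariant form) eigenvalue-separation argument delivers. As written, your paragraph on $N(\la)$ quietly uses this irreducibility before the paragraph in which you prove it; reordering so that the semisimplicity/Casimir step precedes the identification $V^{\q}(\la)\cong U_{\q}/J$ makes the argument complete and brings it in line with the proof in \cite{PHR}.
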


\noindent{5.2.}
In order to realize the irreducible module $V^{\q}(\la)$ for $\la\in \La^+$,
we recall the machinery constructed by Radford in \cite{Rad}.
One can also check this material in \cite[\S1.5]{AuS}.
\begin{defn}
Let $H$ and $K$ be two Hopf algebras. The pair $(H,K)$ is called a
\textit{Radford pair} if there exists Hopf algebra morphisms $i: H\rw K$ and $p: K\rw H$ such that $p\circ i=\mi_H$.
\end{defn}

Once a Radford pair $(H,K)$ is given, we have the following isomorphism of Hopf algebras $\Phi:K\cong R\# H,~a\mapsto \sum a_{(1)}(i\circ p \circ S)(a_{(2)})\ot p(a_{(3)})$, where $R=K^{\bs{co}H}=\{k\in K\mid (\mi\ot p)\De(k)=k\ot 1\}$, the set of right coinvariants,
is a braided Hopf algebra in ${^H_H}\m{Y}\m{D}$.

Now fix a weight $\la\in\La^+$, and define $T=\bK[K_i^{\pm1},{K'_i}^{\pm1},K_\la^{\pm1}:i\in I]$
as the group algebra of the free abelian group $\bZ^{2|I|+1}$, containing $H$ defined in Section 3 as a Hopf subalgebra.
Meanwhile, we take the enlarged vector spaces $W'=W\op\bK v_\la$ and $N=W'\ot T$.
Analogous to the former construction, we consider the following $T$-Hopf bimodule structure on $N$:

(1) $N$ has the trivial right $T$-Hopf module structure.

(2) The left $T$-Hopf module structure of $N$ is defined as follows: Similarly we first deal with $W'$.
All the module and comodule structures defined on $W$ are preserved for $W'$ when restricted to $W$.
Here complete the remaining cases.
\[\begin{array}{l}
K_\la\cdot E_i=q_{\al_i\la}^{-1}E_i,\quad K_\la\cdot F_i=q_{\al_i\la}F_i,\quad K_\la\cdot \xi_i=\xi_i,\\
K_i\cdot v_\la=q_{\al_i\la}v_\la,\quad K'_i\cdot v_\la=q_{\la\al_i}^{-1}v_\la,\quad K_\la\cdot v_\la=q_{\la\la}v_\la,\\
\de_L(v_\la)=K_\la\ot v_\la.
\end{array}
\]
Meanwhile, $T$ derives the left $T$-module and comodule structure from the left multiplication and its comultiplication.
$N=W'\ot T$ becomes a left $T$-Hopf module as the tensor product. Moreover, we define the map $\al_N: N\ot N\rw N$ as the trivial inflation of $\al_M$ on $N\ot N$ as follows:
For any $K, K'\in T$, if $\mu$ is the constant such that $K\cdot F_j=\mu F_j$, then
\[\al_N(E_iK\ot F_jK')=\de_{ij}\dfrac{q_{ii}\mu\xi_iKK'}{q_{ii}-1},~i,j\in I,\]
and for any other elements not of the above form, $\al_N$ gives $0$.

Similarly, the above setting also provides a quantum quasi-symmetric algebra $Q_T(N)$.
For instance, $\al_N$ again satisfies the condition (3) given in Definition \ref{qsa}, as
\[\begin{split}
\al_N(K_\la\cdot(E_iK\ot F_jK'))&=\al_N((K_\la\cdot E_i)K_\la K\ot F_jK')=\de_{ij}\dfrac{q_{\al_i\la}^{-1}q_{ii}q_{\al_j\la}\mu\xi_iK_\la KK'}{q_{ii}-1}\\
&=\de_{ij}\dfrac{q_{ii}\mu\xi_iK_\la KK'}{q_{ii}-1}=K_\la\cdot\al_N(E_iK\ot F_jK'),
\end{split}\]

The ideal of $Q_T(N)$ generated by $\xi_i-K_i{K'_i}^{-1}+1,~i\in I$ is also a Hopf ideal.
Thus, we have the corresponding Hopf quotient $U_T(N)$.
It is clear that $Q_T(M)$ is a Hopf subalgebra of $Q_T(N)$.
We define a gradation on $Q_T(N)$ by setting $\mb{deg}(v_\la)=1$ and all elements in $T$ and $M$ be of degree $0$.
Then we get a graded Hopf algebra
\[Q_T(N)=\bigoplus_{n\in\bN}Q_T(N)_{(n)}\]
with $Q_T(N)_{(0)}=Q_T(M)$. Let $i: Q_T(M)\rw Q_T(N)$ and $p:Q_T(N)\rw Q_T(M)$
be the embedding into degree $0$ and the projection onto degree $0$ respectively.
One gets a Radford pair $(Q_T(M),Q_T(N))$. Since the Hopf ideal $\lb\xi_i-K_i{K'_i}^{-1}+1\mid i\in I\rb$ is homogeneous,
both $U_T(M)$ and $U_T(N)$ inherit the gradation from $Q_T(M)$ and $Q_T(N)$ such that $U_T(N)_{(0)}=U_T(M)$.
That is another Radford pair $(U_T(M),U_T(N))$.

Now applying the machinery of Radford, we have an isomorphism of $U_T(M)$-Hopf bimodules $U_T(N)\cong U_T(N)^{\bs{coR}}\ot U_T(M)$, where $U_T(N)^{\bs{coR}}$ admits a $U_T(M)$-Yetter-Drinfel'd module structure via the adjoint action.
Moreover, it can focus on any homogeneous components,
\[U_T(N)_{(n)}\cong U_T(N)_{(n)}^{\bs{coR}}\ot U_T(M).\]
In particular, we highlight the degree one component $R(1):=U_T(N)_{(1)}^{\bs{coR}}$.

In Theorem \ref{iso}, we give a Hopf algebra isomorphism $\psi:U_{\q}(\mg_A)\rw U_H(M)$.
Now we show that $R(1)$ realizes the irreducible module $V^{\q}(\la)$ via the map $\psi$.

\begin{theorem}
For the generic case, $R(1)\cong V^{\q}(\la)$ as left $U_{\q}(\mg_A)$-modules.
\end{theorem}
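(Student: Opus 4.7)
The plan is to construct a $U_{\q}(\mg_A)$-module isomorphism $\phi\colon V^{\q}(\la)\to R(1)$ sending the canonical highest-weight generator of $V^{\q}(\la)$ to the element $v_\la\in N\subset U_T(N)$. First I would verify $v_\la\in R(1)$: from $\De_c(v_\la)=K_\la\ull\ot v_\la+v_\la\ull\ot 1$ together with $p(v_\la)=0$ (since $v_\la$ has $v_\la$-degree one) and $p(1)=1$, one obtains $(\mi\ot p)\De(v_\la)=v_\la\ot 1$, so $v_\la$ is a right $p$-coinvariant of $v_\la$-degree one.

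Next I would verify that $v_\la$ satisfies, under the left adjoint action $\mb{ad}_l$ transported to $U_{\q}(\mg_A)$ via the isomorphism $\psi$ of Theorem~\ref{iso}, all the defining relations of $V^{\q}(\la)=U_{\q}/J$ listed in Proposition~\ref{iteg}. The weight identities $K_i\triangleright v_\la=q_{\al_i\la}v_\la$ and $K'_i\triangleright v_\la=q_{\la\al_i}^{-1}v_\la$ are immediate from the prescribed $T$-action on $W'$. The vanishing $E_i\triangleright v_\la=0$ follows by a direct QQSA computation: the multiplication formula of Section~2 produces $E_i*v_\la=E_iK_\la\ot v_\la+q_{\al_i\la}v_\la K_i\ot E_i$, which coincides termwise with $K_i*v_\la*K_i^{-1}*E_i=q_{\al_i\la}(v_\la*E_i)$, so the two contributions cancel. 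A parallel calculation yields $(F_iK'_i)\triangleright v_\la=(q_{\la\al_i}^{-1}-q_{\al_i\la})F_iK_\la\ot v_\la$, a nonzero analog of $f_i v_\la$. The main obstacle is then the integrability relation $(F_iK'_i)^{\triangleright(1+\la_i)}v_\la=0$, with $\la_i:=\lan\la,\al_i^\vee\ran\in\bZ_{\geq 0}$; I would handle it by induction on $r$, mirroring the skew-derivation computation used in the proof of Theorem~\ref{iso} for the Serre relations (R7), producing at step $r$ a scalar of the shape $(r)_{q_{ii}}!\prod_{k}(\cdot)$ in which the compatibility (\ref{wei}), namely $q_{\al_i\la}q_{\la\al_i}=q_{ii}^{\la_i}$, plays the exact role that (\ref{car}) played in killing the Serre scalar at $s=1-a_{ij}$.

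Granted these three families of relations, the universal property of $V^{\q}(\la)$ furnishes the $U_{\q}$-module map $\phi$, which is injective because $V^{\q}(\la)$ is irreducible in the generic case (Proposition~\ref{iteg}) and $\phi(v_\la)=v_\la\neq 0$. For surjectivity I would invoke the Radford projection $\rho\colon U_T(N)\to R$ attached to the pair $(U_T(M),U_T(N))$: since $U_T(N)_{(1)}$ is $\bK$-spanned by elements $a\,v_\la\,b$ with $a,b\in U_T(M)$, a short calculation using $\De(v_\la)=K_\la\ot v_\la+v_\la\ot 1$ together with $p(v_\la)=0$ gives
\[\rho(a\,v_\la\,b)=\ve(b)\,\mb{ad}_l(a)(v_\la).\]
Therefore $R(1)=\rho(U_T(N)_{(1)})=\mb{ad}_l(U_T(M))(v_\la)$ lies in the image of $\phi$, so $\phi$ is surjective and hence an isomorphism.
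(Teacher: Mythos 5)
Your proposal is correct and follows essentially the same route as the paper: identify $R(1)=\mb{ad}_l(U_T(M))(v_\la)$ via the Radford projection, check that the generators of the ideal $J$ of Proposition \ref{iteg} annihilate $v_\la$ (with the integrability relation $f_i^{1+\lan\la,\al_i^\vee\ran}\cdot v_\la=0$ killed by the same inductive scalar computation and the compatibility (\ref{wei})), and conclude by irreducibility of $V^{\q}(\la)$. The only cosmetic difference is that you build the map $V^{\q}(\la)\to R(1)$ and argue injectivity and surjectivity separately, whereas the paper phrases it as a surjection from the irreducible module, which is forced to be an isomorphism.
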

\begin{proof}
Using the isomorphism $\Phi$ from the Radford pair $(U_T(M),U_T(N))$, one can see that $R(1)=\mb{ad}_l(U_T(M))(v_\la)=\mb{ad}_l(U_H(M))(v_\la)$.
Indeed, any element in $R(1)$ looks like $\sum_j x_j*v_\la*y_j$ with $x_j,y_j\in U_T(M)$, then
\[\begin{split}
\sum_j&x_j*v_\la*y_j=\sum_j\lb (x_j)_{(1)}*v_\la*(y_j)_{(1)}\rb*S\lb (x_j)_{(2)}*(y_j)_{(2)}\rb\\
&=\sum_j(x_j)_{(1)}*v_\la*S\lb (x_j)_{(2)}\rb=\mb{ad}_l(\sum_jx_j)(v_\la)\in\mb{ad}_l(U_T(M))(v_\la).
\end{split}\]
Now we check that the ideal $J$ given in Proposition \ref{iteg} annihilates the vector $v_\la$. For any $i\in I$,
\[\begin{array}{l}
\begin{split}
e_i\cdot v_\la&=\mb{ad}_l(\psi(e_i))(v_\la)=E_i*v_\la-K_i*v_\la*K_i^{-1}*E_i
=E_i*v_\la-q_{\al_i\la}v_\la*E_i\\
&=E_iK_\la\ot v_\la+(K_i\cdot v_\la)K_i\ot E_i-q_{\al_i\la}(v_\la K_i\ot E_i+
(K_\la\cdot E_i)K_\la\ot v_\la)=0,
\end{split}\\
\om_i\cdot v_\la=\mb{ad}_l(\psi(\om_i))(v_\la)=K_i*v_\la*K_i^{-1}=q_{\al_i\la}v_\la\\
\om'_i\cdot v_\la=\mb{ad}_l(\psi(\om'_i))(v_\la)=K'_i*v_\la*{K'_i}^{-1}=q_{\la\al_i}^{-1}v_\la\\
\end{array}
\]
For $f_i^{1+\lan\la,\al_i^\vee\ran}\cdot v_\la$, we have
\[\begin{split}
f_i\cdot v_\la&=\mb{ad}_l(\psi(f_i))(v_\la)=F_iK'_i*v_\la*{K'_i}^{-1}-v_\la*F_i
=q_{\la\al_i}^{-1}F_i*v_\la-v_\la*F_i\\
&=q_{\la\al_i}^{-1}\lb F_iK_\la\ot v_\la+({K'_i}^{-1}\cdot v_\la){K'_i}^{-1}\ot F_i\rb-\lb v_\la{K'_i}^{-1}\ot F_i+
(K_\la\cdot F_i)K_\la\ot v_\la\rb\\
&=(q_{\la\al_i}^{-1}-q_{\al_i\la})F_iK_\la\ot v_\la.
\end{split}
\]
When $r\geq2$, we have
\[\begin{split}
f_i\cdot&\lb F_i{K'_i}^{-(r-2)}K_\la\ot\cdots\ot F_i{K'_i}^{-1}K_\la\ot F_iK_\la\ot v_\la\rb\\
&=\mb{ad}_l(\psi(f_i))\lb F_i{K'_i}^{-(r-2)}K_\la\ot\cdots\ot F_i{K'_i}^{-1}K_\la\ot F_iK_\la\ot v_\la\rb\\
&=F_iK'_i*\lb F_i{K'_i}^{-(r-2)}K_\la\ot\cdots\ot F_i{K'_i}^{-1}K_\la\ot F_iK_\la\ot v_\la\rb*{K'_i}^{-1}\\
&-\lb F_i{K'_i}^{-(r-2)}K_\la\ot\cdots\ot F_i{K'_i}^{-1}K_\la\ot F_iK_\la\ot v_\la\rb*F_i\\
&=q_{ii}^{r-1}q_{\la\al_i}^{-1}F_i*\lb F_i{K'_i}^{-(r-2)}K_\la\ot\cdots\ot F_i{K'_i}^{-1}K_\la\ot F_iK_\la\ot v_\la\rb\\
&-\lb F_i{K'_i}^{-(r-2)}K_\la\ot\cdots\ot F_i{K'_i}^{-1}K_\la\ot F_iK_\la\ot v_\la\rb*F_i\\
&=(r)_{q_{ii}^{-1}}(q_{ii}^{r-1}q_{\la\al_i}^{-1}-q_{\al_i\la}) F_i{K'_i}^{-(r-1)}K_\la\ot\cdots\ot F_i{K'_i}^{-1}K_\la\ot F_iK_\la\ot v_\la.
\end{split}
\]
Hence, $f_i^r\cdot v_\la=(r)_{q_{ii}^{-1}}!\prod\limits_{k=1}^r(q_{ii}^{k-1}q_{\la\al_i}^{-1}-q_{\al_i\la})F_i{K'_i}^{-(r-1)}K_\la\ot\cdots\ot F_i{K'_i}^{-1}K_\la\ot F_iK_\la\ot v_\la$ for any $r\geq1$.
In particular, $f_i^{1+\lan\la,\al_i^\vee\ran}\cdot v_\la=0$ for all $i\in I$.

As a consequence, we get a left $U_{\q}$-module surjection $V^{\q}(\la)\cong U_{\q}/J \twoheadrightarrow R(1)$ with $1$ mapping to $v_\la$.
Since $V^{\q}(\la)$ is irreducible, that is an isomorphism.
\end{proof}

\begin{rem}
Now suppose that $q_{ii}~(i\in I)$ are all roots of unity with $\mb{ord}(q_{ii})=\ell_i$,
and $A=(a_{ij})_{i,j\in I}$ is a finite Cartan matrix. If for any $i\in I$, $\ell_i$ is odd and
not divisible by $3$ if $i$ belongs to a connected component of type $G_2$,
then we have $\ell_i=\ell_j$ whenever $i, j$ lie in the same connected component \cite[\S4]{AS1}.

It is harmless to assume further that $A$ is indecomposable, i.e., the corresponding Dynkin diagram is connected,
then we can set $\ell=\ell_i$ for all $i\in I$ under the previous assumption.
Now it can be seen that if the weight $\la\in\La^+$ lies in the fundamental alcove, i.e.,
$0<\lan\la+\rho,\al^\vee\ran<\ell$ for all $\al\in\Phi^+$,
then the module $R(1)$ can still realize the irreducible module $V^\q(\la)$.
%
\end{rem}


\medskip
\bibliographystyle{amsalpha}

\end{document}